\newtheorem{theorem}{Theorem}
\newtheorem{definition}{Definition}
\newtheorem{corollary}[theorem]{Corollary}
\newtheorem{proposition}[theorem]{Proposition}
\newtheorem{example}{Example}
\newtheorem{obs}{Remark}
\newtheorem{conjectura}{Conjecture}
\newenvironment{proof}[1][Proof]{\noindent\textbf{#1\,} }{\hfill \rule{0.5em}{0.5em}\medskip}
\title{Automorphisms and superalgebra structures on the Grassmann algebra}
\author{Alan de Ara\'ujo Guimar\~aes\thanks{Supported by PhD Grant from CNPq, Brazil, and by CNPq grant No. 421129/2018-2}, Plamen Koshlukov\thanks{Partially supported by FAPESP grant No. 2014/09310-5 and by CNPq grant No. 304632/2015-5.}\\
Department of Mathematics, State University of Campinas\\
651 Sergio Buarque de Holanda, 13083-859 Campinas, SP, Brazil\footnote{A. A. Guimar\~aes' current address: Department of Mathematics, Federal University of Rio Grande do Norte, 59078-970 Natal, RN, Brazil}\\
e-mails: \texttt{alansimoes10@hotmail.com}, \texttt{plamen@ime.unicamp.br}
} 
\date{}
\begin{document}
\maketitle

\begin{abstract}
Let $F$ be a field of characteristic zero and let $E$ be the Grassmann algebra of an infinite dimensional $F$-vector space $L$. In this paper we study the superalgebra structures (that is the $\mathbb{Z}_{2}$-gradings) that the algebra $E$ admits. By using the duality between superalgebras and automorphisms of order $2$ we prove that in many cases the $\mathbb{Z}_{2}$-graded polynomial identities for such structures coincide with the $\mathbb{Z}_{2}$-graded polynomial identities of the "typical" cases $E_{\infty}$, $E_{k^\ast}$ and $E_{k}$ where the vector space $L$ is homogeneous. Recall that these cases were completely described by Di Vincenzo and Da Silva in \cite{disil}. Moreover we exhibit a wide range of non-homogeneous $\mathbb{Z}_{2}$-gradings on $E$ that are $\mathbb{Z}_{2}$-isomorphic to $E_{\infty}$, $E_{k^\ast}$ and $E_{k}$. In particular we construct a $\mathbb{Z}_{2}$-grading on $E$ with only one homogeneous generator in $L$ which is $\mathbb{Z}_{2}$-isomorphic to the natural $\mathbb{Z}_{2}$-grading on $E$, here denoted by $E_{can}$. 
\end{abstract}

\section{Introduction}

Let $F$ be a field of characteristic zero. If $L$ is a vector space over $F$ with basis $e_{1}$, $e_{2}$, \dots, the infinite dimensional Grassmann algebra $E$ of $L$ over $F$ has a basis consisting of $1$ and all monomials $e_{i_1}e_{i_2}\cdots e_{i_k}$ where $i_{1}<i_{2}<\cdots <i_{k}$, $k\geq 1$. The multiplication in $E$ is induced by the rule $e_{i}e_{j}=-e_{j}e_{i}$ for all $i$ and $j$; in particular $e_i^2=0$ for each $i$. The Grassmann algebra is the most natural example of a \textsl{superalgebra}, it is widely used in various parts of Mathematics and also in Theoretical Physics. The Grassmann algebra $E$ is one of the most important algebras satisfying a polynomial identity, also known as PI-algebras. Its polynomial identities were described by Latyshev \cite{latyshev}, and later on, in great detail,  by Krakowski and Regev \cite{KR}. It was observed by P. M. Cohn that the Grassmann algebra satisfies no standard identities. Recall that the standard polynomial $s_n$ is the alternating sum of all monomials obtained by permuting the variables in $x_1x_2\cdots x_n$. In fact the true importance of the Grassmann algebra in the area of PI algebras was revealed through the theory developed by A. Kemer in the mid eighties. Kemer proved that every associative PI-algebra over a field of characteristic zero is PI-equivalent (that is it satisfies the same polynomial identities) to the Grassmann envelope of a finite dimensional associative superalgebra, see \cite{basekemer2, basekemer}. In Kemer's structure theory for the ideals of identities of associative algebras the natural $\mathbb{Z}_{2}$-grading on $E$  was used. Here we point out that this superalgebra structure on $E$ is defined by $E_{can}=E_{(0)}\oplus E_{(1)}$. Here $E_{(0)}$ is the vector subspace of $E$ spanned by 1 and all monomials of even length and $E_{(1)}$ is spanned by all monomials of odd length. It is immediate to see that $E_{(0)}$ is the centre of $E$ while $E_{(1)}$ is the "anticommuting" part of $E$. Recall here that if $A=A_0\oplus A_1$ is a $\mathbb{Z}_2$-graded algebra then its Grassmann envelope is defined as $E(A)=A_0\otimes E_{(0)} \oplus A_1\otimes E_{(1)}$. 

Clearly the Grassmann algebra admits various other gradings. A natural and interesting problem in this direciton is to investigate the structure of group gradings on the Grassmann algebra and the corresponding graded polynomial identities. 

Let $[x_{1},x_{2}]=x_{1}x_{2}-x_{2}x_{1}$ be the commutator of $x_{1}$ and $x_{2}$. We define $[x_{1},x_{2},x_{3}]=[[x_{1},x_{2}],x_{3}]$ and so on for more variables. In other words when no additional brackets are written we assume the longer commutators left normed. As mentioned above, Latyshev, and Krakowski and Regev proved that the triple commutator $[x_{1},x_{2},x_{3}]$ forms a basis for the ordinary polynomial identities of $E$ (see \cite{latyshev, KR}). Krakowski and Regev also computed the codimension sequence of the ideal of identities of $E$. The interested reader could find more details concerning the polynomial identities of the Grassmann algebra in \cite{agpk} and the references therein.

Group gradings on algebras and the corresponding graded polynomial identities have been extensively studied in PI theory during the last three decades. The graded polynomial identities satisfied by an algebra are ``easier'' to describe than the ordinary ones. Here we recall that the polynomial identities of an associative algebra $A$ are known in very few instances. These include the Grassmann algebra $E$ (over any field), the $2\times 2$ matrix algebra \cite{razmyslovm2, drenskym2} (in characteristic 0), \cite{pkm2} (over infinite fields of characteristic different from 2), and \cite{malkuz} (over a finite field); the upper triangular matrix algebras \cite{malcev, siderov}, and the algebra $E\otimes E$ \cite{popov} in characteristic 0. 

On the other hand the gradings on matrix algebras are known \cite{bz}. The graded identities of these algebras are well understood, see for example \cite{vas1, vas2, ssa1, ssa2} for the natural gradings on the $n\times n$ matrices. The gradings and the corresponding graded identities on the upper triangular matrices are also well known \cite{vaza, opa}. The graded identities for natural gradings on classes of important algebras also have been described, see for example \cite{vinnar1, vinnar2}, and also \cite{sapk}, as well as the references in these three papers.  

The Grassmann algebra admits the natural grading by the cyclic group $\mathbb{Z}_{2}$ of order two. Its structure from the point of view of the PI theory is well known and easy to deduce, see for example \cite{GMZ}. In recent years a substantial number of papers has presented results on gradings and their graded identities for the Grassmann algebra. In all of them, two conditions have been imposed, namely:

\begin{itemize}
\item The grading group $G$ is finite. 
\item All generators $e_{1}$, $e_{2}$, \dots, $e_{n}$, \dots{} of $E$ are  homogeneous. This means that the basis of the vector space $L$ is homogeneous. 
\end{itemize}

When a grading on $E$ satisfies the latter condition it is called \textsl{homogeneous grading}. In \cite{disil} the authors studied all homogeneous superalgebra structures defined on the Grassmann algebra. These were denoted as $E_{k}$, $E_{k^\ast}$ and $E_{\infty}$ (we give, for the readers' convenience, their definitions in the next section of the paper). Their $\mathbb{Z}_{2}$-graded polynomial identities were also described in \cite{disil}. In this context the following question arises naturally:

\begin{itemize}
\item Is the list $\{E_{k}$, $E_{k^\ast}$, $E_{\infty}\}$ of the Grassmann superalgebras  complete?
\end{itemize}

Clearly one has to seek an answer to the above question up to graded isomorphisms. 

In order to look for a possible structure of a superalgebra on the Grassmann algebra (which does not fit in any one of the three cases above) it is necessary to study $\mathbb{Z}_{2}$-gradings on $E$ without the hypothesis of homogeneity. 

We refer the reader to \cite{disilplamen, centroneG} and the references therein for various results concerning group grading on the Grassmann algebra. In \cite{disilplamen} the authors approach the problem of grading on $E$ by a cyclic group of prime order, and in \cite{centroneG} the author addresses the case of finite abelian groups. 

In the present paper we shall study general structures of superalgebras on the Grassmann algebra, without supposing the condition of homogeneity of the vector space $L$. To this end we use the duality between $\mathbb{Z}_{2}$-grading and automorphisms of order $2$ on $E$. This duality is well known. It relies on  the fact that if $G$ is a finite abelian group then $G$ is isomorphic to its dual group assuming that the field is large enough. As we are interested in gradings by the group $\mathbb{Z}_2$ we need no further assumptions on the base field (apart from its characteristic being different from 2). Thus if $\varphi\in Aut(A)$ is an automorphism of an algebra $A$ of order two, that is $\varphi^2=1$ then one has a $\mathbb{Z}_2$-grading on $A$ given by $A= A_{0,\varphi}\oplus A_{1,\varphi}$. Here $A_{0,\varphi}$ and $A_{1,\varphi}$ are the eigenspaces in $A$ associated to eigenvalues 1 and $-1$ of the linear transformation $\varphi$. Reciprocally to each $\mathbb{Z}_2$-grading on $A$ one associates an automorphism of $A$ of order 2 as follows. If $A=A_0\oplus A_1$ is the $\mathbb{Z}_2$-grading the automorphism $\varphi$ is defined by $\varphi(a_0+a_1) = a_0-a_1$ for every $a_i\in A_i$, $i=0$, 1. We shall need this duality in the form of a duality between group gradings and group actions, see for example \cite{GMZ} for a discussion in the general case.

Let us fix a basis $\beta=\{e_{1},e_{2},\ldots,e_{n},\ldots\}$ of the vector space $L$ and an automorphism $\varphi\in Aut(E)$ such that $\varphi^{2}=1$, we consider the set
\[
I_{\beta}=\{n\in\mathbb{N}\mid\varphi(e_{n})=\pm e_{n}\}.
\]
There exist the following four possibilities: 
\begin{enumerate}
	\item $I_{\beta}=\mathbb{N}$. 
	\item $I_{\beta}\neq\mathbb{N}$ is infinite. 
	\item $I_{\beta}$ is finite and non empty.
\end{enumerate}

Given a basis $\beta$ of $L$ it is possible that $I_{\beta}=\emptyset $ but $I_{\beta'}\neq\emptyset $ for some other basis $\beta'$ of $L$, see Example \ref{example conjecture}. Hence the fourth possibility that we shall consider is the following.
\begin{enumerate}
	\item [4.]$I_{\gamma}=\emptyset $ for every basis $\gamma$ of the vector space $L$.
\end{enumerate}

The first possibility, when $I_{\beta}=\mathbb{N}$, corresponds to the homogeneous case. As already mentioned, it was completely described by Di Vincenzo and Da Silva in their paper \cite{disil}.

Therefore in order to construct non-homogeneous Grassmann superalgebras one is led to deal with automorphisms $\varphi$ on $E$ satisfying either one of the conditions 2, 3 or 4. In this paper we shall exhibit structures of types 2 and 3. We shall also prove that these structures are in fact equivalent to homogeneous superalgebras. In particular we shall construct one kind of a Grassmann superalgebra  where only one generator among the $e_i$ is homogeneous. Nevertheless it will turn out that such a superalgebra is  equivalent to the one coming from the natural grading  $E_{can}$.

Furthermore we shall prove that the fourth structure does not exist in quite many cases. We will not provide examples of Grassmann superalgebras of type 4 since we could not find any. In fact we have some ground to conjecture that the fourth case does not happen at all. But we have not been able to prove it yet.

\section{Preliminaries}
Let $F$ be a field and let $A$ be a unitary associative $F$-algebra. We say that $A$ is a $\mathbb{Z}_{2}$-graded algebra (or superalgebra) whenever $A=A_{0}\oplus A_{1}$ where $A_{0}$, $A_{1}$ are $F$-subspaces of $A$ satisfying $A_{i}A_{j}\subset A_{i+j}$ for $i$, $j\in\mathbb{Z}_{2}$. The vector subspace $A_{i}$ is called the {\sl $i$-homogeneous component} of $A$. If $a\in A_{i}$ we say that $a$ is homogeneous and is of (homogeneous) degree $\|a\|=i$. A vector subspace (subalgebra, ideal) $W\subset A$ is homogeneous if $W=(W\cap A_{0})\oplus (W\cap A_{1})$. 

Here we point out that we use "freely" the terms \textsl{superalgebra} and $\mathbb{Z}_2$-\textsl{graded algebra} as synonymous although this is an abuse of terminology. In the associative case they are indeed synonymous while in the nonassociative setting they are not. Indeed, a Lie or a Jordan superalgebra is not, as a rule, a Lie or a Jordan algebra. The correct setting in the general case should be as follows. Let $A=A_0\oplus A_1$ be a $\mathbb{Z}_2$-graded algebra and let $\mathfrak{V}$ be a variety of algebras (not necessarily associative). Then $A$ is a $\mathfrak{V}$-superalgebra whenever $A_0\otimes E_{(0)} \oplus A_1\otimes E_{(1)}$ is an \textsl{algebra} belonging to $\mathfrak{V}$. (We draw the readers' attention that one does not require $A\in\mathfrak{V}$.) Since we shall deal with associative algebras only such a distinction is not relevant for our purposes, and we are not going to make any difference between superalgebras and $\mathbb{Z}_2$-graded algebras.

If $A$, $B$ are superalgebras, a homomorphism $f\colon A\rightarrow B$ is a $\mathbb{Z}_{2}$-{\it graded homomorphism} if $f(A_{i})\subset B_{i}$ for all $i\in\mathbb{Z}_{2}$. When there exists a $\mathbb{Z}_{2}$-graded isomorphism between $A$ and $B$ we say that $A$ and $B$ are $\mathbb{Z}_{2}$-isomorphic or \textsl{equivalent}.

One defines a free object in the class of superalgebras by considering the free $F$-algebra over the disjoint union of two countable sets of variables, denoted by $Y$ and $Z$. We assume further that the elements of $Y$ are of degree zero and the elements of $Z$ are of degree $1$. This algebra is denoted by $F\langle Y\cup Z \rangle$. Its even part is the vector space spanned by all monomials whose degree counting only the elements of $Z$, is an even integer. The remaining monomials span the odd component. It is straightforward that $F\langle Y\cup Z\rangle$ is a free algebra in the sense that for every superalgebra $A$ and for every map $\varphi\colon Y\cup Z\to A$ such that $\varphi(Y)\subseteq A_0$ and $\varphi(Z)\subseteq A_1$ there exists unique homomorphism of $\mathbb{Z}_2$-graded algebras $F\langle Y\cup Z\rangle\to A$ that extends $\varphi$.

We say that the polynomial $f(y_{1},\ldots, y_{l},z_{1},\ldots, z_{m})\in F\langle Y\cup Z \rangle$ is a $\mathbb{Z}_{2}$-{\it graded polynomial identity} for a superalgebra $A$ if $f(a_{1},\ldots, a_{l},b_{1},\ldots, b_{m})=0$ for all substitutions such that $\| a_i\|=0$ and $\|b_j\|=1$. The set $T_{2}(A)$ of all $\mathbb{Z}_{2}$-graded polynomial identity of $A$ is a homogeneous ideal  of $F\langle Y\cup Z \rangle$. It is called the $T_{2}$-ideal of $A$, and denoted by $T_2(A)$.

The description of all structures of a superalgebra on a given algebra is an important task in Ring theory, and in particular in PI theory. As commented above, in \cite{disil} the authors considered $\mathbb{Z}_{2}$-grading on the Grassmann algebra $E$ over a field of characteristic zero such that the vector space $L$ is homogeneous in the grading. There exist three structures, namely:
\[
\|e_{i}\|_{k}=\begin{cases} 0,\text{ if }  i=1,\ldots,k\\ 
1, \text{ otherwise} 
\end{cases},
\]
\[
\|e_{i}\|_{k^\ast}=\begin{cases} 1,\text{ if }  i=1,\ldots,k\\ 
0, \text{ otherwise } 
\end{cases},
\]	
and
\[
\|e_{i}\|_{\infty}=\begin{cases} 0,\text{ if }  i\text{ is even }\\ 1, \text{ otherwise } 
\end{cases}.
\]
These provide us with three superalgebras, $E_{k}$, $E_{k^\ast}$ and $E_{\infty}$, respectively. The $\mathbb{Z}_{2}$-graded polynomial identities of $E_{k}$, $E_{\infty}$, and $E_{k^\ast}$ were also described in \cite{disil}. These structures were studied over an infinite field of positive characteristic $p>2$ in \cite{centroneP} and also over finite fields, see \cite{LFG}. From now on we shall call $E_{k}$, $E_{\infty}$ and $E_{k^\ast}$ the \textsl{homogeneous Grassmann superalgebras}.

\subsection{Gradings, automorphisms, and their duality} 
Consider $A$ an associative $F$-algebra. There exists a natural duality between $\mathbb{Z}_{2}$-gradings and automorphisms of order 2 on $A$. The duality, as commented above, is defined as follows. 

If $\varphi\in Aut(A)$ is such that $\varphi^{2}=1$ then $A_{\varphi}=A_{0,\varphi}\oplus A_{1,\varphi}$ 
where the homogeneous components are the eigenspaces corresponding to the eigenvalues $1$ and $-1$ of $\varphi$, respectively. The decomposition in a direct sum of the eigenspaces exists since $F$ is of characteristic different from 2.

The general facts about duality between gradings and actions of groups can be found, for example, in \cite[Chapter 3]{bookGB} and also in the paper \cite{GMZ}. We observe that the homogeneous $\mathbb{Z}_{2}$-gradings on $E$ correspond to the automorphisms on $E$ satisfying
\[
\varphi(e_i)=\pm e_i.
\]
If $\varphi\in Aut(E)$ with $\varphi^{2}=1$ we observe that
\[
e_{i}=(e_{i}+\varphi(e_{i}))/{2} + (e_{i}-\varphi(e_{i}))/{2}, \mbox{\textrm{ for }}i\in\mathbb{N}.
\]
Setting $a_{i}=e_{i}+\varphi(e_{i})/{2}$ we have
\begin{itemize}
	\item $\varphi(e_i)=-e_i + 2a_{i}$,
	\item $\varphi(a_i)=a_i$, that is, $a_i$ is of degree zero in the $\mathbb{Z}_{2}$-grading $E_{\varphi}$, and
	\item $\varphi(e_i - a_i)=-(e_i - a_i)$, that is, $e_i - a_i $ is of degree $1$ in the $\mathbb{Z}_{2}$-grading $E_{\varphi}$.   
\end{itemize}

\begin{definition} Assume $\varphi\in Aut(E)$ is of order two and let $E_{can}=E_{(0)}\oplus E_{(1)}$ be the natural $\mathbb{Z}_{2}$-grading on $E$. We say that $\varphi$ is of canonical type if 
	\begin{enumerate}
		\item $\varphi(E_{(0)})=E_{(0)}$;
		\item $\varphi(E_{(1)})=E_{(1)}$.
	\end{enumerate}	
\end{definition}

Concerning the canonical automorphisms it is easy to check that 		
	\begin{itemize} 
	    \item The superalgebras $E_{\infty}$, $E_{k^{\ast}}$ and $E_{k}$ correspond to automorphisms of canonical type. 
	
    	\item If $\varphi$ is an automorphism of order $2$ on $E$ we have that 
	$\varphi$ is of canonical type if only if $a_{i}\in E_{(1)}$ for all $i\in\mathbb{N}$. 
	\end{itemize}

Let us fix a basis $\beta=\{e_{1},e_{2},\ldots,e_{n},\ldots\}$ of the vector space $L$ and an automorphism $\varphi\in Aut(E)$ such that $\varphi^{2}=1$. Then $\varphi$, as a linear transformation, has eigenvalues 1 and $-1$ only, and moreover, there exists a basis of the vector space $E$ consisting of eigenvectors. (It is well known from the elementary Linear algebra that this fact does not depend on the dimension of the vector space as long as the characteristic of $F$ is different from 2.) Then $E=E(1)\oplus E(-1)$ where $E(t)$ is the eigenspace for the eigenvalue $t$ of the linear transformation $\varphi$. One considers the intersections $L(t)=L\cap E(t)$, $t=\pm 1$. Changing the basis $\beta$, if necessary, one may assume that $L(t)$ is the span of $\beta\cap L(t)$. Clearly this change of basis gives rise to a homogeneous automorphism of $E$ and we can take the composition of it and then $\varphi$. We shall assume that such a change of basis has been done. 

Denote $I_{\beta}=\{n\in\mathbb{N}\mid\varphi(e_{n})=\pm e_{n}\}$. We shall distinguish the following four possibilities: 
\begin{enumerate}
	\item $I_{\beta}=\mathbb{N}$.
	\item $I_{\beta}\neq\mathbb{N}$ is infinite. 
	\item $I_{\beta}$ is finite and non empty.
	\item $I_{\gamma}=\emptyset $ for every linear basis $\gamma$ of $L$. 
\end{enumerate}
The automorphisms of type 1 have been completely described by Di Vincenzo and Da Silva \cite{disil}. Therefore we shall focus on the remaining three cases.

We shall call these automorphisms (and also the corresponding $\mathbb{Z}_{2}$-gradings), \textsl{ automorphisms} ($\mathbb{Z}_{2}$-grading) of  type 1, 2, 3, and 4, respectively. 

\section{Gradings and automorphisms of type 2} 
In this section we shall describe automorphisms of type 2. We start with the following proposition.
\begin{proposition}\label{auto tipo 2} 
	Let $\varphi\in Aut(E)$ be an automorphism such that $\varphi^{2}=1$. Suppose that $\varphi$ is an automorphism of type 2, that is,
$I_{\beta}\neq\mathbb{N}$ is infinite. Then $\varphi$ is of canonical type.  
\end{proposition}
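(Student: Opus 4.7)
The plan is to show directly that $a_i \in E_{(1)}$ for every $i \in \mathbb{N}$, where $a_i = (e_i + \varphi(e_i))/2$. By the criterion stated just before the proposition ($\varphi$ is of canonical type iff $a_i \in E_{(1)}$ for all $i$), this is exactly what we need. For $i \in I_\beta$ the conclusion is immediate, since $\varphi(e_i) = \pm e_i$ forces $a_i \in \{e_i, 0\} \subset E_{(1)}$. So the real work is to handle indices $i \notin I_\beta$.

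First I would exploit that $\varphi$ is an algebra automorphism. Applying $\varphi$ to $e_i e_j + e_j e_i = 0$ yields $\varphi(e_i)\varphi(e_j) + \varphi(e_j)\varphi(e_i) = 0$. Picking any $j \in I_\beta$ so that $\varphi(e_j) = \epsilon_j e_j$ with $\epsilon_j = \pm 1$ and cancelling $\epsilon_j$, one obtains $\varphi(e_i) e_j = -e_j \varphi(e_i)$. Substituting $\varphi(e_i) = -e_i + 2 a_i$ and using once more that $e_i$ anticommutes with $e_j$, the identity collapses to
$$ a_i e_j + e_j a_i = 0 \qquad \text{for all } j \in I_\beta. $$

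Next I would decompose $a_i = a_i^{(0)} + a_i^{(1)}$ with respect to the natural grading $E_{can} = E_{(0)} \oplus E_{(1)}$. Since $E_{(0)}$ is the centre of $E$ while every element of $E_{(1)}$ anticommutes with $e_j$, the $a_i^{(1)}$-part contributes nothing to the anticommutator, and the previous identity collapses further to $2 e_j a_i^{(0)} = 0$ for every $j \in I_\beta$. Expanding $a_i^{(0)}$ in the standard monomial basis of $E_{(0)}$, the equation $e_j a_i^{(0)} = 0$ is equivalent to the statement that every basis monomial occurring with nonzero coefficient in $a_i^{(0)}$ contains the generator $e_j$.

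The main step, and the only one that uses the hypothesis of the proposition in an essential way, is the last one: imposing the containment condition simultaneously for all $j \in I_\beta$, with $I_\beta$ \emph{infinite}, forces every monomial of $a_i^{(0)}$ to involve infinitely many generators, which is impossible since each basis monomial has finite length. Hence $a_i^{(0)} = 0$, so $a_i \in E_{(1)}$, and $\varphi$ is of canonical type. This is also the reason one should expect the remaining cases, where $I_\beta$ is finite or empty, to demand genuinely different arguments: the pigeonhole-style conclusion at the heart of this proof is exactly what fails there.
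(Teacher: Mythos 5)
Your proposal is correct and follows essentially the same route as the paper: both derive $a_ie_j+e_ja_i=0$ for $j\in I_\beta$ from applying $\varphi$ to the anticommutation relation, split $a_i$ into its even and odd parts with respect to $E_{can}$, and use the infiniteness of $I_\beta$ to kill the even part. The only (immaterial) difference is at the end: the paper picks a single $t\in I_\beta$ outside the finite support of the even part so that $e_ta_i^{(0)}=0$ forces $a_i^{(0)}=0$, while you impose the divisibility condition for all $j\in I_\beta$ simultaneously; both are valid instances of the same finiteness argument.
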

\begin{proof}
Let $E_{\varphi}=E_{0,\varphi}\oplus E_{1,\varphi}$ be the superalgebra structure on $E$ induced by $\varphi$, we denote $J=\{j\in\mathbb{N} \mid j\notin I_{\beta}\}$. For the sake of simplicity we shall write $I=I_{\beta}$.

For each $j\in J$ we have $\varphi(e_{j})\neq\pm e_{j}$. Therefore $e_{j}$ is not homogeneous in the grading $E_{\varphi}=E_{0,\varphi}\oplus E_{1,\varphi}$. Thus we can write 
\[
e_{j}=b_{j}+c_{j},
\] 
with $b_{j}\in E_{0,\varphi}$ and $c_{j}\in E_{1,\varphi}$. Moreover we have that each summand $b_j$ and $c_j$ is non zero.

Since $\varphi(c_{j})=-c_{j}$ we have
	\[
	\varphi(e_{j}-b_{j})=-e_{j}+b_{i}
	\]
and this implies the equality $\varphi(e_{j})=-e_{j}+2b_{j}$. 
	Next we prove that each $b_{j}$ is a linear combination of monomials of odd length. To this end we write 
	\[
	b_{j}=b_{j}^{e} + b_{j}^{o}.
	\]
Here $b_{j}^{e}$ is the linear combination of the monomials of even length in $b_{j}$ and $b_{j}^{o}$ is the one consisting of all  monomials of odd length in $b_{j}$.

Choose now $t\in I$ such that $e_{t}$ does not belong to the support of the summands of $b_{j}^{e}$ (this is always possible since the set $I$ is infinite). As
	\[
	e_{j}e_{t}+e_{t}e_{j}=0
	\]
	we obtain that
	\[
	\varphi(e_{j})\varphi(e_{t})+\varphi(e_{t})\varphi(e_{j})=0.
	\]
	By $\varphi(e_{t})=\pm e_{t}$ it follows that
	\[(-e_{j}+2b_{j})e_{t}+e_{t}(-e_{j}+2b_{j})=0\]
which in turn implies
	\[
	b_{j}e_{t}+e_{t}b_{j}=0.
	\]
Hence $(b_{j}^{e} + b_{j}^{o})e_{t}+e_{t}(b_{j}^{e} + b_{j}^{o})=0$. It follows that $e_{t}b_{j}^{e}=0$ and $b_{j}^{e}=0$.
	Therefore the element $b_{j}$ is a linear combination of monomials of odd length for all $j\in J$, and the proof follows.
\end{proof}

Fix a basis $\beta$ of $L$ and let $\varphi$ be an automorphism of type 2. We define the following sets of indices:
\begin{itemize}
\item $I^{+}=\{i\in I\mid \varphi(e_{i})=e_{i}\}$,
\item $I^{-}=\{i\in I\mid \varphi(e_{i})=-e_{i}\},$ and 
\item $J=\{j\in\mathbb{N}\mid j\notin I\}.$
\end{itemize} 
Since $\varphi$ is of type 2 the set $I=I^{+}\cup I^{-}$ is infinite. Therefore we have to take into account the following five cases:
\begin{description}
\item{S1.} Both $|I^+|$ and $|I^-|$ are infinite. 
\item{S2.} $|I^+|$ is infinite but $|I^-|$ and $|J|$ are finite.
\item{S3.} $|I^+|$ is infinite, $|I^-|$ is finite and $|J|$ is infinite. 
\item{S4.} $|I^+|$ is finite, $|I^-|$ is infinite and $|J|$ is finite. 
\item{S5.} $|I^+|$ is finite, $|I^-|$ is infinite and $|J|$ is also infinite. 
\end{description}
Using the notation introduced above we have the following proposition. 

\begin{proposition}
The $T_{2}$-ideal of the graded identities of a Grassmann superalgebra of Case S1 coincides with $T_{2}(E_{\infty})$. 
\end{proposition}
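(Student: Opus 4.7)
The plan is to prove the equality by establishing the two inclusions $T_{2}(E_{\varphi})\subseteq T_{2}(E_{\infty})$ and $T_{2}(E_{\infty})\subseteq T_{2}(E_{\varphi})$ via suitable $\mathbb{Z}_{2}$-graded homomorphisms between $E_{\infty}$ and $E_{\varphi}$. The preliminary step is to invoke Proposition~\ref{auto tipo 2}: since $\varphi$ is of type $2$, it is of canonical type, so for each $j\in J$ the element $b_{j}$ is a sum of odd-length monomials (hence $b_{j}\in E_{(1)}$), and consequently $c_{j}:=e_{j}-b_{j}\in E_{(1)}$ as well.

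For $T_{2}(E_{\varphi})\subseteq T_{2}(E_{\infty})$ I would use that both $|I^{+}|$ and $|I^{-}|$ are infinite: fix bijections between the even (resp.\ odd) positive integers and $I^{+}$ (resp.\ $I^{-}$) and send the corresponding homogeneous generator of $E_{\infty}$ to the matching $e_{i}\in E_{\varphi}$. The images lie in $\{e_{i}\}_{i\in I}$, carry the correct $\mathbb{Z}_{2}$-degree, and pairwise anticommute, so this extends to a $\mathbb{Z}_{2}$-graded embedding $E_{\infty}\hookrightarrow E_{\varphi}$; graded identities of the larger algebra restrict to its subalgebra.

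For the reverse inclusion I would construct a surjective $\mathbb{Z}_{2}$-graded homomorphism $\psi:E_{\infty}\twoheadrightarrow E_{\varphi}$. Choose a surjection from the even-indexed generators of $E_{\infty}$ onto the countable set $\{e_{i}\mid i\in I^{+}\}\cup\{b_{j}\mid j\in J\}\subseteq E_{0,\varphi}$ and from the odd-indexed generators onto $\{e_{i}\mid i\in I^{-}\}\cup\{c_{j}\mid j\in J\}\subseteq E_{1,\varphi}$; such surjections exist because $|I^{+}|$ and $|I^{-}|$ are infinite. Every target lies in $E_{(1)}$, hence the targets pairwise anticommute and square to zero, so by the universal property of the Grassmann algebra the assignment extends to a graded algebra homomorphism $\psi$. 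The image of $\psi$ contains $e_{i}$ for $i\in I$ together with both $b_{j}$ and $c_{j}$ for $j\in J$, and therefore also $e_{j}=b_{j}+c_{j}$, so $\psi$ is onto; being a surjective graded map it is graded-surjective on each homogeneous component. Given any graded identity $f$ of $E_{\infty}$ and any graded substitution in $E_{\varphi}$, lift the substitution through $\psi$ and apply $\psi$ to the vanishing evaluation in $E_{\infty}$ to obtain the vanishing evaluation in $E_{\varphi}$.

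The only delicate points are verifying that the chosen targets of $\psi$ satisfy the Grassmann relations and that $\psi$ is onto; both hinge entirely on the conclusion $b_{j},c_{j}\in E_{(1)}$ furnished by Proposition~\ref{auto tipo 2}, without which the elements $b_{j}$ and $c_{j}$ might fail to anticommute with the remaining generators.
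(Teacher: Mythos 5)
Your proof is correct, and it agrees with the paper on the inclusion $T_{2}(E_{\varphi})\subseteq T_{2}(E_{\infty})$: both arguments identify the graded subalgebra of $E_{\varphi}$ generated by $1$ and the homogeneous generators $e_{n}$, $n\in I^{+}\cup I^{-}$, with a copy of $E_{\infty}$, which is possible precisely because $I^{+}$ and $I^{-}$ are both infinite. Where you genuinely diverge is in the reverse inclusion. The paper obtains $T_{2}(E_{\infty})\subseteq T_{2}(E_{\varphi})$ essentially for free from the description $T_{2}(E_{\infty})=\langle [x_{1},x_{2},x_{3}]\rangle_{T_{2}}$ of \cite{disil}: the graded triple commutators are ordinary identities of $E$, hence graded identities of any $\mathbb{Z}_{2}$-grading on $E$, and no information about $\varphi$ beyond case S1 is used. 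You instead build a surjective $\mathbb{Z}_{2}$-graded homomorphism $E_{\infty}\twoheadrightarrow E_{\varphi}$ by sending the even (resp.\ odd) generators onto $\{e_{i}\}_{i\in I^{+}}\cup\{b_{j}\}_{j\in J}$ (resp.\ $\{e_{i}\}_{i\in I^{-}}\cup\{c_{j}\}_{j\in J}$); here Proposition~\ref{auto tipo 2} does real work, since $b_{j},c_{j}\in E_{(1)}$ is exactly what makes all targets anticommute and square to zero, and surjectivity of a graded map onto a direct sum forces surjectivity on each homogeneous component, so identities pass to the image. Your route is more self-contained in that it never invokes the explicit basis of $T_{2}(E_{\infty})$, and it yields the slightly stronger structural fact that $E_{\varphi}$ is simultaneously a graded quotient and a graded extension of $E_{\infty}$; the paper's route is shorter and independent of the canonical-type property, at the cost of leaning on the known classification of $T_{2}(E_{\infty})$.
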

\begin{proof}
Suppose that $\varphi$ is as in case S1, then  the sets $I^{+}$ and $I^{-}$ are infinite. Let $B$ be the subalgebra of $E_{\varphi}$ generated by $1$ and $e_{n}$, for every $n\in I^{+}\cup I^{-}$. In this case $B\subset E_{\varphi}$ is a homogeneous subalgebra and $B\simeq E_{\infty}$ (isomorphism of $\mathbb{Z}_{2}$-graded algebras). Hence $\langle[x_{1}, x_{2}, x_{3}]\rangle_{T_2}=T_{2}(E_{\infty})\supset T_{2}(E_{\varphi})$ where $\alpha(x_{i})=0$ or $\alpha(x_{i})=1$, for all $i=1$, 2, 3. Therefore it follows $T_{2}(E_{\varphi})=T_{2}(E_{\infty})$.
\end{proof}

Suppose now $E$ is equipped with a grading as in Case S2. Thus up to reordering the basis of $L$ the action of $\varphi$ on the generators is given by
\[
\varphi(e_{n})=\begin{cases} -e_{n},\text{ if }  1\leq n\leq k \\ -e_{n}+2a_{n}, \text{ if }  k+1\leq n\leq k+t \\ e_{n}, \text{ if }n>k+t 
\end{cases}.
\]
Here we write, as above, $a_j= (e_j+\varphi(e_j))/2$. We have that
\begin{itemize}
\item $E_{0,\varphi}\supset V_{0}=span_{F}\{1, e_{n}, a_{k+1},\ldots, a_{k+t}\mid n>k+t\}$. 
\item $E_{1,\varphi}\supset V_{1}=span_{F}\{ e_{1},\ldots,e_{k},(e_{k+1}-a_{k+1}),\ldots, (e_{k+t}-a_{k+t}) \}$.  
\end{itemize}
\begin{obs}\label{descricao precisa}
More precisely, the component $E_{0,\varphi}$ is generated by all products of elements in $V_{0}$ and $V_{1}$ with an even number of factors in $V_{1}$. The component $E_{1,\varphi}$ is generated by these products with an odd number of factors in $V_{1}$.
\end{obs}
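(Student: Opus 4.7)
The plan is to verify three things: (i) each $v\in V_0$ satisfies $\varphi(v)=v$ and each $v\in V_1$ satisfies $\varphi(v)=-v$; (ii) any product $v_{i_1}\cdots v_{i_s}$ with $v_{i_j}\in V_0\cup V_1$ lies in $E_{0,\varphi}$ or $E_{1,\varphi}$ according to the parity of the number of factors drawn from $V_1$; (iii) such products already linearly span all of $E$, so that sorting them by parity exhausts both homogeneous components.

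For (i), the eigenvalue behaviour of the distinguished generators is built into the definitions: $\varphi(a_n)=a_n$ and $\varphi(e_n-a_n)=-(e_n-a_n)$ follow directly from $a_n=(e_n+\varphi(e_n))/2$, while $\varphi(e_n)=e_n$ for $n>k+t$ and $\varphi(e_n)=-e_n$ for $n\le k$ by the Case S2 action. Extending by linearity gives the claim. For (ii), since $\varphi$ is an algebra automorphism,
\[
\varphi(v_{i_1}\cdots v_{i_s})=\varphi(v_{i_1})\cdots\varphi(v_{i_s})=(-1)^{r}\,v_{i_1}\cdots v_{i_s},
\]
where $r$ counts the indices $j$ with $v_{i_j}\in V_1$. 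So the product belongs to $E_{0,\varphi}$ exactly when $r$ is even and to $E_{1,\varphi}$ exactly when $r$ is odd.

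The substantive point is (iii). The algebra $E$ is linearly spanned by the monomials $e_{i_1}\cdots e_{i_s}$ (with $1$ corresponding to $s=0$, which already sits in $V_0$). Each generator $e_n$ either lies in $V_0\cup V_1$ (for $n\le k$ or $n>k+t$) or decomposes explicitly as $e_n=a_n+(e_n-a_n)$ with $a_n\in V_0$ and $e_n-a_n\in V_1$ (for $k+1\le n\le k+t$). Distributing these binary decompositions through each monomial rewrites it as a finite linear combination of products of elements of $V_0\cup V_1$, showing that such products span $E$. Combined with (ii) and the direct sum $E=E_{0,\varphi}\oplus E_{1,\varphi}$, the uniqueness of the graded decomposition forces the even-parity products to span $E_{0,\varphi}$ and the odd-parity ones to span $E_{1,\varphi}$.

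I do not anticipate a real obstacle: the remark is a bookkeeping refinement of the inclusions $V_0\subset E_{0,\varphi}$ and $V_1\subset E_{1,\varphi}$ recorded just above it, and the span assertion reduces to the trivial observation that the chosen generators of $V_0\cup V_1$ still generate $E$ as an associative algebra. The only place where one must be slightly careful is handling the mixed indices $k+1\le n\le k+t$ via the explicit splitting $e_n=a_n+(e_n-a_n)$ before expanding products.
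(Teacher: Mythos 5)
Your argument is correct and is exactly the bookkeeping the authors intend: the paper states this remark without proof, and your three steps (eigenvector property of $V_0\cup V_1$, multiplicativity of $\varphi$ giving the parity rule, and the splitting $e_n=a_n+(e_n-a_n)$ to show such products span $E$, followed by the direct-sum separation) fill it in completely. No gaps.
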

Let $B$ be the subalgebra of $E_{\varphi}$ generated by the elements $e_{1}$, \dots, $e_{k}$, $e_{n}$, for every $n>k+t$.  Then $B$ is homogeneous and
\[
E_{k^\ast}\simeq B\subset E_{\varphi}.
\]
Moreover, each translation $(e_{k+1}-a_{k+1})$, \dots, $(e_{k+t}-a_{k+t})$ has zero square because the automorphism $\varphi$ is canonical. Thus it follows immediately that 
\[
z_{1}z_{2}\ldots z_{k+t+1}
\]
is a $\mathbb{Z}_{2}$-graded polynomial identity for $E_{\varphi}$.
Therefore we obtain
\[
T_{2}(E_{k^\ast})\supset T_{2}(E_{\varphi})\supset T_{2}(E_{(k+t)^\ast}).
\]

By imposing an additional condition we shall describe the $T_{2}$-ideal $T_{2}(E_{\varphi})$. Beforehand we state a theorem due to Anisimov's that will be important for our goals. 

\begin{theorem}[Anisimov, \cite{anisimov2}]\label{anisimov2-theorem}
	Let $\varphi$ be an automorphism of order 2 of $E$, and let $\varphi$ be  of canonical type.  Suppose that at least one of the following two conditions holds:
	\begin{enumerate}
		\item $\dim L_{-1}=l<\infty$ and $\displaystyle \prod\nolimits_{j=1}^{l+1}(\varphi(e_{i_j})-e_{i_j})=0$, for any $l+1$ generators $e_{i_1}$, \dots,  $e_{i_{l+1}}$.
		\item $\dim L_{1}=l<\infty$ and $\displaystyle \prod\nolimits_{j=1}^{l+1}(\varphi(e_{i_j})+e_{i_j})=0$, for any $l+1$ generators $e_{i_1}$, \dots,  $e_{i_{l+1}}$. 	
	\end{enumerate}
	Then one has that $T_{2}(E_{\varphi})=T_{2}(E_{\varphi_{l}})$. 	
\end{theorem}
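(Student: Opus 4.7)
The plan is to prove $T_2(E_\varphi)=T_2(E_{\varphi_l})$ by establishing both inclusions, working under hypothesis~(1); hypothesis~(2) is entirely analogous after swapping the roles of the $\pm 1$ eigenspaces. Under (1), $E_{\varphi_l}$ is the homogeneous superalgebra $E_{l^*}$ with $l$ generators spanning $L_{-1}$ (degree $1$) and infinitely many generators in $L_1$ (degree $0$).

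For $T_2(E_\varphi)\subseteq T_2(E_{\varphi_l})$, one embeds $E_{\varphi_l}$ as a graded subalgebra of $E_\varphi$. The subalgebra $B\subseteq E_\varphi$ generated by $\{e_i : i\in I^+\cup I^-\}$ is homogeneous since each such $e_i$ is a $\pm1$-eigenvector of $\varphi$; together with $|I^-|=l$ and $|I^+|=\infty$ this yields $B\simeq E_{l^*}$ as graded algebras, so every graded identity of $E_\varphi$ restricts to one of $E_{l^*}$.

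For the reverse inclusion I would invoke the description of $T_2(E_{l^*})$ from \cite{disil}. All identities coming from the ungraded theory ($[x_1,x_2,x_3]=0$ and its consequences) hold in $E_\varphi\subseteq E$. The essential identity that distinguishes $T_2(E_{l^*})$ from $T_2(E_\infty)$ is $z_1z_2\cdots z_{l+1}=0$, the product of $l+1$ odd variables. To verify it in $E_\varphi$, set $w_i=(e_i-\varphi(e_i))/2$: the nonzero $w_i$ are $e_i$ for $i\in I^-$ and $e_i-a_i$ for $i\in J$, and since $\varphi$ is canonical they all lie in $E_{(1)}$. Consequently $V_1:=\mathrm{span}\{w_i\}\subseteq E_{1,\varphi}$ consists of pairwise anti-commuting elements all squaring to zero. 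Hypothesis~(1) says $w_{i_1}\cdots w_{i_{l+1}}=0$ whenever $i_1,\dots,i_{l+1}$ are $l+1$ distinct indices; combined with anti-commutativity and $w_i^2=0$, multilinearity gives $v_1\cdots v_{l+1}=0$ for arbitrary $v_k\in V_1$. Using Remark~\ref{descricao precisa}, any odd element of $E_\varphi$ is a linear combination of monomials $u_1\cdots u_m$ (with $u_j\in V_0\cup V_1$) having an odd number of $V_1$-factors. Since the non-scalar elements of $V_0\cup V_1$ all lie in $E_{(1)}$ they pairwise anti-commute; hence in any product $z_1\cdots z_{l+1}$ of $l+1$ such odd elements the $V_1$-factors, of which there are at least $l+1$, may be collected to the left up to sign, leaving a prefix that vanishes.

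The main obstacle will be making the last multilinear reduction fully rigorous: one must track signs carefully as $V_1$-factors anti-commute past $V_0$-factors, and verify that applying the hypothesis on generators inside longer monomials genuinely forces the entire product to vanish. Once this step and the comparison with the generators of $T_2(E_{l^*})$ from \cite{disil} are carried out, the reverse inclusion is established and the theorem follows.
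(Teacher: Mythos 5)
The paper does not actually prove this statement: it is imported verbatim as a theorem of Anisimov from \cite{anisimov2}, so there is no internal proof to compare your argument against, and I assess it on its own merits. Your overall plan (two inclusions, with hypotheses (1)/(2) powering $T_{2}(E_{\varphi_{l}})\subseteq T_{2}(E_{\varphi})$ via the generators of $T_{2}(E_{l^{\ast}})$ from \cite{disil}) is sensible, and the second half of your argument is essentially sound: for a canonical $\varphi$ all the elements $a_{i}=(e_{i}+\varphi(e_{i}))/2$ and $w_{i}=(e_{i}-\varphi(e_{i}))/2$ lie in $E_{(1)}$, hence pairwise anticommute and square to zero, every odd element is a combination of monomials in the $a$'s and $w$'s with at least one $w$-factor, and hypothesis (1) kills any product of $l+1$ elements of $\mathrm{span}\{w_{i}\}$; the sign bookkeeping you worry about is harmless.

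The genuine gap is in the inclusion $T_{2}(E_{\varphi})\subseteq T_{2}(E_{\varphi_{l}})$. You assume $|I^{-}|=l$ and $|I^{+}|=\infty$, but the hypotheses say nothing about $I^{\pm}$: condition (1) constrains the eigenspaces of the \emph{linearization} $\varphi_{l}$ on $L$, not the set of generators that are honest eigenvectors of $\varphi$. In the very situation where the paper applies the theorem (case S2 of Theorem \ref{S2}) one has $|I^{-}|=k$ and $|J|=t$ with $l=k+t$, so your subalgebra $B=\langle e_{i}: i\in I^{+}\cup I^{-}\rangle$ is graded-isomorphic to $E_{k^{\ast}}$, not $E_{l^{\ast}}$, and the resulting inclusion $T_{2}(E_{\varphi})\subseteq T_{2}(E_{k^{\ast}})$ is strictly weaker than what is required, since $T_{2}(E_{k^{\ast}})\supsetneq T_{2}(E_{l^{\ast}})$ for $k<l$; moreover nothing prevents $I^{+}\cup I^{-}$ from being finite or empty. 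The repair is within reach of the objects you already introduced: the $l$ elements $w_{i}$ with $\varphi_{l}(e_{i})=-e_{i}$ and the infinitely many $a_{j}$ with $\varphi_{l}(e_{j})=e_{j}$ are genuine $-1$- and $+1$-eigenvectors of $\varphi$ lying in $E_{(1)}$, and since each equals the corresponding $e_{i}$ plus terms of strictly greater length, the graded homomorphism $E_{l^{\ast}}\to E_{\varphi}$ they determine is injective; this gives the needed embedding. (Alternatively, $T_{2}(E_{\varphi})\subseteq T_{2}(E_{\varphi_{l}})$ holds for any order-2 automorphism by degenerating $\varphi$ to $\varphi_{l}$ through conjugation by the dilations $e_{i}\mapsto te_{i}$ --- this is the point of Anisimov's linearization method.) Finally, under hypothesis (2) the identity to verify is not $z_{1}\cdots z_{l+1}$ but the product-of-commutators generator of $T_{2}(E_{l})$, so the ``entirely analogous'' case does require a separate, if parallel, computation.
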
   

By using Anisimov's theorem \ref{anisimov2-theorem} and an additional we can describe the $\mathbb{Z}_{2}$-graded polynomial identities  for the Grassmann superalgebras in the Case S2.

\begin{theorem}\label{S2}
Let $\varphi$ be an automorphism of $E$ of type S2 and $T_{2}(E_{\varphi})$ its $T_{2}$-ideal of $\mathbb{Z}_{2}$-graded polynomial identities. If each element $a_{k+1}$,  \dots, $a_{k+t}$ is a linear combination of monomials of length $\geq 3$ then $T_{2}(E_{\varphi})=T_{2}(E_{\varphi_{l}})$.  
\end{theorem}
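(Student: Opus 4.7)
My plan is to apply Anisimov's theorem \ref{anisimov2-theorem}, verifying its numerical hypothesis with the help of the length condition. By Proposition \ref{auto tipo 2}, the type-2 automorphism $\varphi$ is of canonical type, which is the standing assumption of Anisimov's theorem; moreover, each $a_{k+j}$ already lies in $E_{(1)}$. In case S2 we have $|I^+|=\infty$, so $\dim L_1=\infty$ and condition~2 of Anisimov's theorem is unavailable; I would therefore work with condition~1. Under the basis normalization discussed in Section~2, $L_{-1}=\mathrm{span}\{e_i:i\in I^-\}$, hence $\dim L_{-1}=k$, and I would take $l=k$.

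The core step is to verify
\[
\prod_{j=1}^{k+1}\bigl(\varphi(e_{i_j})-e_{i_j}\bigr)=0
\]
for any $k+1$ distinct generators $e_{i_1},\ldots,e_{i_{k+1}}$. Any factor with $i_j\in I^+$ vanishes outright, so the non-trivial case is when all the indices lie in $I^-\cup J$; since $|I^-|=k$, at least one of them must belong to $J$. Up to the scalar $(-2)^{k+1}$ the product equals $f_{i_1}\cdots f_{i_{k+1}}$, where $f_i=e_i$ for $i\in I^-$ and $f_i=e_i-a_i$ for $i\in J$. Each $f_i$ lies in $E_{(1)}$, so $f_i^2=0$ and distinct $f_i,f_j$ anticommute.

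The main obstacle is to show that this product of canonically odd elements vanishes, and this is precisely where the length-$\geq 3$ hypothesis becomes decisive. I would expand $f_{i_1}\cdots f_{i_{k+1}}$ over the choice, at each $J$-position, between $e_i$ and $-a_i$; by hypothesis every $a$-contribution introduces Grassmann monomials of length $\geq 3$, so the expansion sits in a controlled length filtration. Combining the Grassmann anticommutation rules, the vanishings $f_i^2=0$, and the relation $\varphi(a_i)=a_i$ forced by $\varphi^2=1$, a careful support/length accounting should show that each surviving monomial either repeats a Grassmann generator (and hence vanishes in $E$) or pairs with a matching monomial of opposite sign elsewhere in the expansion. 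Once the product condition is established, Anisimov's theorem immediately delivers $T_2(E_\varphi)=T_2(E_{\varphi_l})$ with $l=k$, completing the argument.
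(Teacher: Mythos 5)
Your overall strategy (reduce to condition 1 of Anisimov's Theorem \ref{anisimov2-theorem}) is the one the paper uses, but your choice of the parameter $l$ is wrong, and with $l=k$ the product condition you set out to verify is simply false. Take $k=1$, $t=1$, so $I^-=\{1\}$ and $J=\{2\}$; then for the two generators $e_1,e_2$ one has
\[
(\varphi(e_1)-e_1)(\varphi(e_2)-e_2)=(-2e_1)\bigl(-2(e_2-a_2)\bigr)=4(e_1e_2-e_1a_2),
\]
and since $a_2$ is a combination of monomials of length $\geq 3$, the length-two term $e_1e_2$ cannot be cancelled, so the product is non-zero. In general a product of $k+1$ \emph{distinct} anticommuting odd elements with zero squares need not vanish (already $e_1e_2\cdots e_{k+1}\neq 0$ in $E$); your expansion always contains the uncancelled leading term $\pm e_{i_1}\cdots e_{i_{k+1}}$, obtained by choosing $e_i$ rather than $-a_i$ at every $J$-position, so no support/length accounting can rescue the core step. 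Indeed the conclusion you would reach, $T_2(E_\varphi)=T_2(E_{k^\ast})$, is false whenever $J\neq\emptyset$: the polynomial $z_1\cdots z_{k+1}$ lies in $T_2(E_{k^\ast})$ but not in $T_2(E_\varphi)$, as the substitution $z_i\mapsto e_i$ for $i\leq k$ and $z_{k+1}\mapsto e_{k+1}-a_{k+1}$ gives $e_1\cdots e_{k+1}$ plus terms of length $\geq k+3$.

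The paper instead takes $l=|I^-|+|J|=k+t$. Here $L_{-1}$ is to be read as the $(-1)$-eigenspace on $L$ of the linearization $\varphi_\ell$, not as $L\cap E(-1)$; this is precisely where the hypothesis that each $a_{k+j}$ involves only monomials of length $\geq 3$ enters, since it forces $\varphi_\ell(e_j)=-e_j$ for all $j\in J$ and hence $\dim L_{-1}=k+t<\infty$. With $l=k+t$ the product condition is immediate: among any $l+1$ distinct generators at least one lies in $I^+$ and contributes the factor $\varphi(e_i)-e_i=0$ (a repeated generator would contribute the square of a canonically odd element, which also vanishes). Anisimov's theorem then yields $T_2(E_\varphi)=T_2(E_{\varphi_{k+t}})$, consistent with the sandwich $T_2(E_{k^\ast})\supset T_2(E_\varphi)\supset T_2(E_{(k+t)^\ast})$ established just before the theorem.
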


\begin{proof}
By hypothesis we know that $\varphi$ is of type S2 and therefore $I^{-}$ and $J$ are finite sets. Let us assume 
	\[|I^{-}|+ |J|=l.\]
	By Proposition \ref{auto tipo 2} we have that $\varphi$ is of the canonical type and $\dim L_{-1}=l<\infty$, and moreover $\displaystyle \prod\nolimits_{j=1}^{l+1}(\varphi(e_{i_j})-e_{i_j})=0$, for any choice of $l+1$  generators $e_{i_1}$, \dots, $e_{i_{l+1}}$. By Theorem \ref{anisimov2-theorem} it follows that $T_{2}(E_{\varphi})=T_{2}(E_{\varphi_{l}})$.  
\end{proof}

\begin{theorem}
Let $\varphi$ be an automorphism of $E$ of type S4 and $T_{2}(E_{\varphi})$ its $T_{2}$-ideal of $\mathbb{Z}_{2}$-graded polynomial identities. If each element $a_{k+1}$,  \dots, $a_{k+t}$ is a linear combination of monomials of length $\geq 3$ then $T_{2}(E_{\varphi})=T_{2}(E_{\varphi_{l}})$.   
\end{theorem}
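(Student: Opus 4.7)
The plan is to mirror the proof of Theorem \ref{S2} almost verbatim, interchanging the roles of $I^+$ and $I^-$ (equivalently, of $L_1$ and $L_{-1}$) and invoking condition (2) of Anisimov's Theorem \ref{anisimov2-theorem} in place of condition (1). In case S4 the finite sets are now $I^+$ and $J$, so it is the eigenspace $L_1$ rather than $L_{-1}$ that becomes finite dimensional, and condition (2) is precisely designed to exploit this.

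First I would observe that S4 is a special case of type 2, since $I=I^+\cup I^-$ is infinite (because $|I^-|=\infty$); hence Proposition \ref{auto tipo 2} yields that $\varphi$ is of canonical type. Next I would set $l=|I^+|+|J|$, which is finite under the S4 hypothesis, and argue exactly as in the proof of Theorem \ref{S2} that $\dim L_1=l$; this is the step where the assumption that each $a_{k+1},\dots,a_{k+t}$ is a linear combination of monomials of length $\geq 3$ enters, guaranteeing that the contributions from $J$ are genuinely independent of the trivially homogeneous part. Then I would verify the product condition required by Anisimov: for $i\in I^-$ one has $\varphi(e_i)+e_i=-e_i+e_i=0$, while for $i\in I^+$ and $j\in J$ the factors are $2e_i$ and $2a_j$ respectively. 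Given any $l+1$ generators $e_{i_1},\dots,e_{i_{l+1}}$, the pigeonhole principle forces at least one index into $I^-$ (as $|I^+\cup J|=l<l+1$), so the corresponding factor vanishes and
\[
\prod_{j=1}^{l+1}\bigl(\varphi(e_{i_j})+e_{i_j}\bigr)=0.
\]
Condition (2) of Theorem \ref{anisimov2-theorem} then gives $T_2(E_\varphi)=T_2(E_{\varphi_l})$.

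The only point requiring genuine care, and the main potential obstacle, is the dimension identification $\dim L_1=l$: one must make sure the $|J|$ extra contributions to $L_1$ arising from the non-homogeneous indices are not absorbed into $\mathrm{span}\{e_i:i\in I^+\}$. This is where the length $\geq 3$ hypothesis is essential, and it is imported directly from the analogous step in Theorem \ref{S2}. Once this matching is in place, the rest is purely mechanical and the symmetry with S2 is complete.
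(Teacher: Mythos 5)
Your proposal is correct and is essentially the paper's own argument: the paper simply states that the proof repeats that of Theorem \ref{S2} verbatim, which when unpacked is exactly your mirrored version with $I^{+}$ and $I^{-}$ (hence $L_{1}$ and $L_{-1}$) interchanged and condition (2) of Theorem \ref{anisimov2-theorem} invoked in place of condition (1). Your pigeonhole verification of the vanishing product and your remark on identifying $\dim L_{1}=l$ just make explicit the steps the paper leaves implicit.
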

\begin{proof}
	The proof repeats verbatim the one of Theorem \ref{S2}. 
\end{proof}

In the remaining cases we can not guarantee the equality but at least one of the inclusions still holds. 

\begin{theorem}
Let $\varphi$ be an automorphism of type 2 of $E$. The following statements hold: 
\begin{enumerate}
	\item If $\varphi$ satisfies  S3 then $T_{2}(E_{\varphi})\subseteq T_{2}(E_{k^\ast})$. 
	\item If $\varphi$ satisfies S5 then $T_{2}(E_{\varphi})\subseteq T_{2}(E_{k})$. 		
\end{enumerate}
\end{theorem}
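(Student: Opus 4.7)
The plan is to prove both inclusions by exhibiting, in each case, a $\mathbb{Z}_2$-graded subalgebra of $E_{\varphi}$ that is graded isomorphic to the target Grassmann superalgebra; the inclusion of $T_2$-ideals is then automatic from the fact that graded identities pass to graded subalgebras.

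For item (1), assume $\varphi$ is of type S3 with $|I^-|=k$ (finite), $|I^+|$ infinite, and $|J|$ infinite. Let $B$ be the subalgebra of $E$ generated by $1$ and the set $\{e_n\mid n\in I^+\cup I^-\}$. Since every $e_n$ with $n\in I$ is an eigenvector of $\varphi$ (of eigenvalue $+1$ if $n\in I^+$ and $-1$ if $n\in I^-$), each generator of $B$ is homogeneous in $E_{\varphi}$, and therefore $B$ is a homogeneous subalgebra of $E_{\varphi}$. Because $I^+\cup I^-$ is a subset of the canonical basis $\beta$ of $L$ and the $e_n$'s anticommute and have square zero, $B$ is itself the Grassmann algebra on an infinite-dimensional vector space whose basis has exactly $k$ vectors of degree $1$ and infinitely many vectors of degree $0$. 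This is precisely the defining data of $E_{k^{\ast}}$, so $B\simeq E_{k^{\ast}}$ as $\mathbb{Z}_2$-graded algebras. Consequently $T_2(B)=T_2(E_{k^{\ast}})$, and since every graded polynomial identity of $E_{\varphi}$ restricts to a graded identity of the graded subalgebra $B$, one obtains $T_2(E_{\varphi})\subseteq T_2(B)=T_2(E_{k^{\ast}})$.

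For item (2), the argument is symmetric: with $|I^+|=k$ finite and $|I^-|$ infinite in the S5 case, the subalgebra $B$ generated by $1$ and $\{e_n\mid n\in I^+\cup I^-\}$ is again homogeneous in $E_{\varphi}$, and as a graded algebra it has $k$ generators of degree $0$ together with infinitely many generators of degree $1$. This matches the definition of $E_k$, hence $B\simeq E_k$ as graded algebras and $T_2(E_{\varphi})\subseteq T_2(B)=T_2(E_k)$.

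There is no real obstacle beyond verifying that the subalgebra $B$ is homogeneous and that its graded structure matches $E_{k^{\ast}}$ or $E_k$; the point to keep in mind is that the non-homogeneous generators (those indexed by $J$) and the translations $a_j$ they produce play no role in this inclusion, because we are only exploiting a homogeneous sub-object. The opposite inclusion is not available here precisely because $|J|$ is infinite: one cannot appeal to Theorem \ref{anisimov2-theorem}, which would require $\dim L_{-1}$ (resp.\ $\dim L_{+1}$) to be finite together with the corresponding vanishing of products of $(\varphi(e_{i_j})\mp e_{i_j})$; both conditions fail in S3 and S5 respectively.
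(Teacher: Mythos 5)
Your proof is correct and follows essentially the same route the paper intends: the paper omits the proof as ``similar to Theorem \ref{S2}'', and the relevant mechanism there is exactly your one, namely exhibiting the homogeneous graded subalgebra $B$ generated by $1$ and $\{e_n\mid n\in I^+\cup I^-\}$, identifying it with $E_{k^\ast}$ (resp.\ $E_k$), and using that graded identities restrict to graded subalgebras. Your closing remark about why Anisimov's theorem cannot be invoked to upgrade the inclusion to an equality in S3 and S5 is also accurate.
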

\begin{proof}
The proof is similar to that of Theorem \ref{S2} and therefore we omit it. 
\end{proof}

These results show that the $\mathbb{Z}_{2}$-graded polynomials identities of many superalgebras of type 2 coincide with the identities of the homogeneous case. In other words one cannot distinguish these superalgebras by means of their graded polynomial identities.

\subsection{Concrete superalgebras of type 2}  
In this subsection we construct certain $\mathbb{Z}_{2}$-gradings of type 2. We shall prove that in many ``typical'' cases the superalgebras of type 2 are $\mathbb{Z}_{2}$-isomorphic to homogeneous superalgebras. In what follows we present a method to construct such structures. 

Choose an infinite set $I\subset\mathbb{N}$, $I\neq\mathbb{N}$, and define an action
$\varphi(e_{i})=\pm{e_{i}}$ whenever $i\in I$. As before we form the sets:
\begin{itemize}

\item $I^{+}=\{i\in I\mid\varphi(e_{i})=e_{i}\}$;
\item $I^{-}=\{i\in I\mid\varphi(e_{i})=-e_{i}\}$;
\item $J=\{j\in\mathbb{N}\mid j\notin I\}$.
\end{itemize}
Note that $I=I^{+}\cup I^{-}$, a disjoint union. For each $j\in J$, we construct the elements $d_{j}\in E$ in the following way: 
\begin{enumerate}
\item {\it The element $d_{j}$ is a linear combination of monomials of odd length.}
\item {\it All monomials that occur in $d_{j}$ are products of generators whose indices belong to $I$}.
\item {\it All monomials that occur in $d_{j}$ have an even numbers of factors in $I^{-}$}. 
\end{enumerate}
We extend the action of $\varphi$ on all $e_i$ as follows.
\[
\varphi(e_{i})=\begin{cases} \ e_{i},\text{ if } i\in I^{+} \\ -e_{i}, \text{ if } i\in I^{-} \\ -e_{i}+2d_{i},\text{ if } i\in J
\end{cases}
\]
Hence
\begin{itemize}
\item Condition (1) implies that $\varphi$ can be extended to an endomorphism of $E$, and
\item Conditions (2) and (3) imply that $\varphi(d_{j})=d_{j}$ for every $j\in J$. 
\end{itemize}

We claim that $\varphi$ is an automorphism of order 2. Indeed, if $i\in I$ we note that $\varphi^{2}(e_{i})=e_{i}$. If $j\in J$ we have
\[
\varphi^{2}(e_{j})=\varphi(-e_{j}+2d_{j})=-(-e_{j}+2d_{j})+2d_{j}=e_{j}.
\]
This ensures that $\varphi$ is an automorphism of order 2 of $E$. Let $E_{\varphi}=E_{0,\varphi}\oplus E_{1,\varphi}$ be the superalgebra defined by $\varphi$. Its homogeneous components are
\begin{itemize}
\item $E_{0,\varphi}\supset span_{F}\{e_{i}, d_{j}\mid i\in I^{+}, j\in J \}$ and
\item $E_{1,\varphi}\supset span_{F}\{e_{i}, (e_{j}-d_{j})\mid i\in I^{-}, j\in J\}$.	
\end{itemize}

We recall that the precise description for these components was done in Remark \ref{descricao precisa}. 
We call the previous method for obtaining a $\mathbb{Z}_{2}$-grading on $E$ the \textsl{method $1$}. For each automorphism $\varphi$ constructed by means of the method $1$, it is straightforward to deduce that that the superalgebra $E_{\varphi}$ is not homogeneous. Moreover if $J$ is infinite then there exist infinitely many generators of $E$ which are not homogeneous. 

In the following proposition we show that the construction from method $1$, while yielding \textsl{many} non homogeneous generators of $E$, produces superalgebras which do not differ significantly from the homogeneous ones.

\begin{proposition}
Let $\varphi\in Aut(E)$ be an automorphism of order 2 constructed by the method $1$. Then there exists a $\mathbb{Z}_{2}$-graded isomorphism between $E_{\varphi}$ and some of the superalgebras $E_{\infty}$, $E_{k^\ast}$ or $E_{k}$, for some $k$. 
\end{proposition}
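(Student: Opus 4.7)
\noindent The plan is to exhibit a new system of generators $\{f_n\}_{n\in\mathbb{N}}$ of $E$ that are homogeneous in $E_\varphi$ and that play the role of the canonical basis of a homogeneous Grassmann superalgebra. Define $f_n = e_n$ for $n\in I$ and $f_n = e_n - d_n$ for $n\in J$. Since each $d_n$ is a linear combination of monomials of odd length, each $f_n$ lies in the odd component $E_{(1)}$ of the natural grading. A short computation shows that any two elements of $E_{(1)}$ anticommute and square to zero, so the $f_n$ satisfy the Grassmann relations $f_m f_n + f_n f_m = 0$ and $f_n^2 = 0$. Moreover, using $\varphi(d_n) = d_n$ (guaranteed by conditions $(2)$ and $(3)$ in Method~$1$), one checks that $\varphi(f_n) = f_n$ for $n\in I^+$ while $\varphi(f_n) = -f_n$ for $n\in I^-\cup J$, so every $f_n$ is homogeneous in $E_\varphi$.

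\noindent Next let $E_h$ denote the Grassmann algebra $E$ equipped with the homogeneous $\mathbb{Z}_2$-grading whose degree-$0$ generators are indexed by a set of cardinality $|I^+|$ and whose degree-$1$ generators by a set of cardinality $|I^-\cup J|$; since $I$ is infinite, at least one of these cardinalities is infinite, so $E_h$ coincides (up to relabeling) with one of $E_\infty$, $E_{k^\ast}$, or $E_k$. Fix a bijection between the canonical generators of $E_h$ and the family $\{f_n\}$ that preserves degrees, and let $\Psi\colon E_h\to E_\varphi$ be the algebra homomorphism extending this correspondence. By construction $\Psi$ is $\mathbb{Z}_2$-graded. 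Surjectivity is immediate: $e_n = f_n$ lies in the image for $n\in I$, and for $n\in J$ one has $e_n = f_n + d_n$ with $d_n$ in the subalgebra generated by $\{e_i:i\in I\}\subset \operatorname{Im}\Psi$.

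\noindent The main obstacle is injectivity, which I would establish via the length filtration on $E$. Splitting $d_n = d_n^{(1)} + d_n^{(3)} + \cdots$ by monomial length, the length-$1$ part of $f_n$ is $v_n = e_n$ for $n\in I$ and $v_n = e_n - d_n^{(1)}$ for $n\in J$. Since $d_n^{(1)}$ only involves generators indexed by $I^+$, a short triangular linear algebra argument shows $\{v_n\}_{n\in\mathbb{N}}$ is a basis of $L$, so the ordered products $v_{s_1}\cdots v_{s_m}$ form a basis of $E$. A product $f_{s_1}\cdots f_{s_k}$ is a sum of monomials of length $\geq k$ whose length-$k$ component is exactly $v_{s_1}\cdots v_{s_k}$. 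Thus if a nontrivial relation $\sum c_s\,f_{s_1}\cdots f_{s_{k_s}} = 0$ existed, picking the minimum $m$ among the $k_s$ with $c_s\neq 0$ and projecting onto the length-$m$ component would produce a nontrivial dependence $\sum_{k_s=m} c_s\,v_{s_1}\cdots v_{s_m} = 0$, contradicting linear independence. Hence $\Psi$ is a graded isomorphism, completing the proof.
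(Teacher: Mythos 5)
Your proof is correct and follows essentially the same route as the paper: you construct the same homogeneous grading (degree $0$ on $I^{+}$, degree $1$ on $I^{-}\cup J$) and the same graded homomorphism sending $e_{n}\mapsto e_{n}$ for $n\in I$ and $e_{n}\mapsto e_{n}-d_{n}$ for $n\in J$. The only difference is in verifying bijectivity: the paper simply exhibits the explicit inverse $e_{n}\mapsto e_{n}+d_{n}$ (well defined because $d_{n}\in E_{(1)}$ and fixed by the map, as it involves only generators indexed by $I$), whereas you prove injectivity by a length-filtration argument — more work, but equally valid.
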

\begin{proof}
Let $E_{\varphi}=E_{0,\varphi}\oplus E_{1,\varphi}$ be a $\mathbb{Z}_{2}$-grading on $E$ induced by some automorphism $\varphi$ constructed by method $1$. 

We define $E=E^{(0)}\oplus E^{(1)}$ the homogeneous $\mathbb{Z}_{2}$-grading induced by the following grading on the generators of $E$:
\begin{eqnarray*}
\|e_{n}\|=0,&&\textrm{ if  }n\in I^{+},\\
\|e_{m}\|=1,&&\textrm{ otherwise.}
\end{eqnarray*}
Clearly such a grading produces one of the superalgebras $E_{\infty}$, $E_{k^\ast}$ or $E_{k}$.
Now let $f_{\varphi}\colon E^{(0)}\oplus E^{(1)}\longrightarrow E_{\varphi}=E_{0,\varphi}\oplus E_{1,\varphi}$ be defined by
\[
f_{\varphi}(e_{i})=\begin{cases} e_{i},\text{ if } i\in I \\ e_{i}-d_{i}, \text{ if } i\in J
\end{cases}.
\]
Since the images of $f_\varphi$ satisfy the relation $x_{1}x_{2}+x_{2}x_{1}=0$ we can extend $f_{\varphi}$ to a homomorphism of $E$. By the definition of $f_\varphi$ we can see that it preserves the degrees of the generators, it is invertible and its inverse $g_{\varphi}\colon E_{\varphi}\longrightarrow E^{(0)}\oplus E^{(1)} $ is of the form
\[g_{\varphi}(e_{i})=\begin{cases} e_{i},\text{ if } i\in I \\ e_{i}+d_{i}, \text{ if } i\in J
\end{cases}.\]
Therefore $E_{\varphi}$ is isomorphic to one of the superalgebras $E_{\infty}$, $E_{k^\ast}$ or $E_{k}$. 
\end{proof}

\begin{example}
Let $I=I^{+}=\{2,3,4,\ldots\}$ and define the automorphism $\varphi$ by its action on the generators of $E$:
\[
\varphi(e_{i})=\begin{cases} e_{i},\text{ if } i\neq 1 \\ -e_{1}+2e_{2}e_{3}e_{4}, \text{ if } i=1
\end{cases}.
\]
In this case the grading is 
\begin{itemize}
\item $E_{0,\varphi}\supset span_{F}\{e_{2}, e_{3}, e_{4},\ldots \}$.
\item $E_{1,\varphi}\supset span_{F}\{e_{1}-e_{2}e_{3}e_{4} \}$.
\end{itemize}
Consider $E$ endowed with the $\mathbb{Z}_{2}$-grading produced by the method presented in the previous proposition. In other words, $\|e_{1}\|=1$ and $\|e_{i}\|=0$ for $i\neq 1$ (it is exactly the superalgebra $E_{1^\ast}$). Hence the action $f_{\varphi}\colon E\to E_{\varphi}$, given by
\[f_{\varphi}(e_{i})=\begin{cases} e_{i},\text{ if } i\neq 1 \\ e_{1}-e_{2}e_{3}e_{4}, \text{ if } i=1
\end{cases},\]
can be extended to a $\mathbb{Z}_{2}$-graded isomorphism between $E_{1^\ast}$ and $E_{\varphi}$. 
Consequently, 
\[
T_{2}(E_{\varphi})=\langle [x_{1},x_{2},x_{3}], z_{1}z_{2}\rangle_{T_{2}} .
\]
On the other hand we can define another action as follows:
\[
\psi(e_{i})=\begin{cases} e_{i},\text{ if } i\neq 1 \\ -e_{1}+2(e_2+e_3+e_4e_5e_6 +e_3e_4e_7e_8e_9), \text{ if } i=1
\end{cases}.
\]
Note that $\psi$ yields the superalgebra 
\begin{itemize}
\item $E_{0,\psi}\supset span_{F}\{ e_{2}, e_{3}, e_{4},\ldots \}$.
\item $E_{1,\psi}\supset span_{F}\{ e_{1}-(e_2+e_3+e_4e_5e_6 +e_3e_4e_7e_8e_9) \}$.
\end{itemize}
By the previous proposition we obtain that 
\[T_{2}(E_{\varphi})=T_{2}(E_{\psi})=T_{2}(E_{1^\ast}).\]
\end{example}

\subsection{Triangular $\mathbb{Z}_{2}$-gradings on $E$} 

In this subsection we describe a particular way of constructing $\mathbb{Z}_{2}$-gradings on $E$. We call these gradings \textsl{triangular $\mathbb{Z}_{2}$-gradings}. For each $N\in\mathbb{N}$, we shall construct a subgroup of $Aut(E)$ of order $2^{N}$ which has the property that all its elements induce Grassmann superalgebras.

Recall that we denote by $E_{can}=E_{(0)}\oplus E_{(1)}$ the natural $\mathbb{Z}_{2}$-grading of $E$; if $k\in\mathbb{N}$, we denote:
\[
E(e_{1},\ldots, e_{k})=\text{ the span of all monomials without factors in } \{e_{1},\ldots, e_{k}\}.
\]

\begin{definition}
	For every $n\in\mathbb{N}$, the automorphism $T_{n}\colon E\to E$ defined by
\[
T_{n}(e_{j})=\begin{cases} e_{j},\text{ if } j\neq n,\\ -e_{j}+2P_{j},\text{ if }j=n,\end{cases}
\]
where $P_{n}\in E(e_{1},\ldots, e_{n})\cap E_{(1)}$, is called a triangular automorphism of index $n$. 
	 \end{definition}

\begin{obs}
It is easy to prove that each triangular automorphism
\begin{itemize}
	\item is of order 2,
	\item can be constructed by the method $1$,
	\item induces a Grassmann superalgebra that is $\mathbb{Z}_{2}$-isomorphic to $E_{1^\ast}$.
\end{itemize}	
\end{obs}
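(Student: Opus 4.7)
The three assertions will be verified in sequence; the observation that drives everything is that $P_n\in E(e_1,\ldots,e_n)$ is a polynomial in the generators $e_j$ with $j>n$, and every such generator is fixed by $T_n$. In particular $T_n(P_n)=P_n$.

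For the order-$2$ claim I would compute $T_n^2(e_j)$ directly on each generator. If $j\neq n$ the result is $e_j$ trivially; if $j=n$ then
\[
T_n^2(e_n)=T_n(-e_n+2P_n)=-T_n(e_n)+2T_n(P_n)=-(-e_n+2P_n)+2P_n=e_n,
\]
using the observation above. (A quick sanity check that $T_n$ is well defined as an algebra endomorphism uses $P_n^2=0$, which holds because $P_n$ is an odd element of $E$ and odd elements anticommute with themselves, so that $T_n(e_n)^2=(-e_n+2P_n)^2=4P_n^2=0$.)

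To see that $T_n$ arises from method~$1$ I would choose the following data: $I=\mathbb{N}\setminus\{n\}$, $I^{+}=I$, $I^{-}=\emptyset$, $J=\{n\}$, and $d_n:=P_n$. The three requirements on $d_n$ are then immediate: (1) since $P_n\in E_{(1)}$, it is by definition a linear combination of monomials of odd length; (2) every monomial of $P_n$ involves only generators with indices $>n$, hence with indices in $I$; (3) the condition on $I^{-}$ is vacuous as $I^{-}=\emptyset$. With these choices the prescribed method-$1$ action coincides with $T_n$ on each generator $e_j$.

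Finally, for the $\mathbb{Z}_{2}$-isomorphism with $E_{1^\ast}$ I would invoke the preceding proposition: every automorphism produced by method~$1$ induces a superalgebra $\mathbb{Z}_{2}$-isomorphic to the homogeneous one in which $e_j$ has degree $0$ iff $j\in I^{+}$. In our setting $I^{+}=\mathbb{N}\setminus\{n\}$, so exactly one generator (namely $e_n$) has degree $1$; after relabelling this is precisely $E_{1^\ast}$, giving $E_{T_n}\simeq E_{1^\ast}$. There is no serious obstacle here --- the only subtle point is the identity $T_n(P_n)=P_n$ used in item~(1), which rests entirely on the hypothesis $P_n\in E(e_1,\ldots,e_n)$ built into the definition of a triangular automorphism.
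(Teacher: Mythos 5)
Your proposal is correct and follows exactly the route the paper intends: the remark is stated without proof, and the natural verification is the direct computation of $T_n^2$ using $T_n(P_n)=P_n$, the identification $I^{+}=\mathbb{N}\setminus\{n\}$, $J=\{n\}$, $d_n=P_n$ with method $1$, and then the preceding proposition on method $1$ to get $E_{T_n}\simeq E_{1^\ast}$. No gaps; the key point you isolate, that $P_n\in E(e_1,\ldots,e_n)$ forces $T_n(P_n)=P_n$, is precisely what makes all three items work.
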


Take $N\in\mathbb{N}$ and let $T_{1}$, \dots, $T_{N}$ be triangular automorphisms such that 
\[
P_{j}\in E(e_{1},\ldots, e_{N})\cap E_{(1)}
\]
for every $j=1$, \dots, $N$. 

Denote by $\tau_{N}$ the subgroup of $Aut(E)$ generated by $T_{1}$, \dots, $T_{N}$, that is 
\[
\tau_{N}=\langle T_{1},\ldots, T_{N} \rangle.
\]

\begin{proposition}
	Let $\tau_{N}$ be the subgroup of $Aut(E)$ constructed above. Then we have that:
	\begin{enumerate}
		\item $\tau_{N}$ is an abelian group.
		\item $\tau_{N}$ is of order $2^{N}$.
		\item Each element of $\tau_{N}$ induces a Grassmann superalgebra.
		\item If $\varphi\in\tau_{N}$ and $\varphi=T_{j_1}\circ\cdots\circ T_{j_s}$, with $j_{i}\neq j_{l}$, for $i\neq l$, then $E_{\varphi}$ is $\mathbb{Z}_{2}$-isomorphic to $E_{s^\ast}$.
	\end{enumerate}
\end{proposition}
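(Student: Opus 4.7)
My plan is to handle the four assertions in order, relying heavily on the preceding proposition that identifies superalgebras produced by method $1$ with one of $E_{\infty}$, $E_{k^\ast}$, $E_{k}$.

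\textbf{Commutativity.} First I would verify directly that $T_{i}\circ T_{j}=T_{j}\circ T_{i}$ for $i\neq j$. The crucial observation is that because $i,j\leq N$ and each $P_{l}\in E(e_{1},\ldots,e_{N})\cap E_{(1)}$, the element $P_{i}$ is a polynomial in the generators $e_{k}$ with $k>N$ only. Since $T_{j}$ acts as the identity on every $e_{k}$ with $k\neq j$, in particular on all $e_{k}$ with $k>N$, we obtain $T_{j}(P_{i})=P_{i}$, and symmetrically $T_{i}(P_{j})=P_{j}$. With this in hand, one checks the three cases $k\notin\{i,j\}$, $k=i$, $k=j$ separately; in each case both compositions give the same value on $e_{k}$.

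\textbf{Order.} Since $\tau_{N}$ is abelian and generated by $N$ involutions, it is a quotient of $(\mathbb{Z}/2\mathbb{Z})^{N}$, so $|\tau_{N}|\leq 2^{N}$. For the reverse inequality I would show that distinct non-empty subsets $S\subseteq\{1,\ldots,N\}$ yield distinct automorphisms. Using commutativity and the identity $T_{k}(P_{i})=P_{i}$ for $k\neq i$, an induction on $|S|$ gives the explicit formula
\[
\varphi_{S}(e_{i})=\begin{cases} -e_{i}+2P_{i}, & i\in S,\\ e_{i}, & i\notin S,\end{cases}
\]
for $\varphi_{S}=\prod_{i\in S}T_{i}$. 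Because $P_{i}$ has no $e_{i}$-summand, the right-hand side distinguishes the subset $S$, so $S\mapsto\varphi_{S}$ is injective and $|\tau_{N}|=2^{N}$.

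\textbf{Induced superalgebras and identification with $E_{s^\ast}$.} From the explicit formula above I can identify $\varphi_{S}$ as an automorphism of order $2$ constructed by method $1$, with the choices
\[
I^{+}=\mathbb{N}\setminus S,\qquad I^{-}=\emptyset,\qquad J=S,\qquad d_{j}=P_{j}\;(j\in J).
\]
The three requirements of method $1$ hold automatically: each $P_{j}$ is a sum of monomials of odd length (since $P_{j}\in E_{(1)}$); each such monomial involves only $e_{k}$ with $k>N$, and these indices all lie in $I^{+}\subset I$; and the condition on the parity of $I^{-}$-factors is vacuous because $I^{-}=\emptyset$. Hence, by the preceding proposition, $E_{\varphi_{S}}$ is $\mathbb{Z}_{2}$-isomorphic to one of $E_{\infty}$, $E_{k^\ast}$, $E_{k}$. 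Tracing through the proof of that proposition, the associated homogeneous grading on $E$ declares $\|e_{n}\|=0$ for $n\in I^{+}=\mathbb{N}\setminus S$ and $\|e_{n}\|=1$ for $n\in S$; so exactly $s=|S|$ generators become odd, giving $E_{s^\ast}$. This yields item $(4)$ and, by specialising to the empty product and to singletons, confirms item $(3)$.

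The only step demanding real care is the commutativity computation, since the action of $T_{i}$ on $e_{i}$ is non-trivial; but this is entirely controlled by the hypothesis $P_{j}\in E(e_{1},\ldots,e_{N})$, which guarantees that the $P_{j}$'s are mutually fixed. Everything else is bookkeeping.
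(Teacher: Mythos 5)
Your proof is correct and follows essentially the same route as the paper: commutativity via the observation that $T_{j}(P_{i})=P_{i}$ (forced by $P_{i}\in E(e_{1},\ldots,e_{N})$), and the identification of each product $T_{j_1}\circ\cdots\circ T_{j_s}$ as a method-$1$ automorphism with $J=S$ and $d_{j}=P_{j}$, so that the earlier proposition yields $E_{s^\ast}$. If anything you are more complete than the paper on item (2), where the paper calls the order $2^{N}$ an ``immediate consequence'' of commutativity while you actually supply the needed injectivity of $S\mapsto\varphi_{S}$ via the explicit formula for $\varphi_{S}(e_{i})$.
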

\begin{proof}
	We start with the proof of statement 1. Let $u$, $v\in\{1,\ldots, N\}$ with $u<v$. If $j\neq u$, $j\ne v$, then clearly
\[
T_{u}\circ T_{v}(e_{j})=T_{v}\circ T_{u}(e_{j})=e_{j}.
\]
Moreover 
	\[
	T_{u}\circ T_{v}(e_{u})=T_{u}(e_{u})=-e_{u}+2P_{u}, \quad 
T_{v}\circ T_{u}(e_{u})=T_{v}(-e_{u}+2P_{u})=-e_{u}+2P_{u}
\]
hence $T_{u}\circ T_{v}(e_{u})=T_{v}\circ T_{u}(e_{u})$. In the same way we compute that $T_{u}\circ T_{v}(e_{v})=T_{v}\circ T_{u}(e_{v})$, proving that $\tau_{N}$ is an abelian group. Statement 2 is an immediate consequence of statement 1. 
    
Now we prove statement 3. If $\varphi\in\tau_{N}$, it is enough to show that $\varphi^{2}=1$. Let  
	 \[\varphi=T_{j_1}\circ\cdots\circ T_{j_s}.\]
	 Obviously $\varphi(e_{n})=e_{n}$ whenever $n\notin\{j_{1},\ldots, j_{s}\}$. On the other hand if $n=j_{t}$ for some $t=1$, \dots, $s$, it is easy to see that:
	 \[
	 \varphi(e_{j_{t}})=-e_{j_{t}}+2P_{j_{t}}
	 \]
	 and we get $\varphi^{2}(e_{j_{t}})=e_{j_{t}}$.

In order to prove Statement 4 one applies the argument used in the case of method 1.
\end{proof}

\section{Gradings and automorphisms of type 3}
We focus our attention on automorphisms of type 3. Some of the statements are similar to results of the previous section. We start with the following  proposition. 

\begin{proposition}\label{the aut 3}
Let $\varphi\in Aut(E)$ be an automorphism of order 2 and suppose that $\varphi$ is  of type 3, that is, $I_{\beta}=\{1,\ldots,k\}$ where $k\geq 1$. Then one of the following two possibilities holds:
\begin{itemize}
		\item either $\varphi$ is of the canonical type, or 
		\item $\varphi$ is defined by
		$\displaystyle\varphi(e_{n})=\begin{cases}\pm e_{n},\text{ if } 1\leq n\leq k \\ -e_{n}+2e_{1}\cdots e_{k}V_{n}+2W_{n}, \text{ if } n>k
		\end{cases}.$
\end{itemize}       
Here the set $\{V_{n}\}_{n > k}$ consists of nonzero elements, its elements are of the same parity as that of $k$, each $V_{n}$ has no summands with factors among $e_{1}$, \dots,  $e_{k}$, and the set $\{W_{n}\}_{n > k}$  is formed by elements of odd length. Moreover the following relations hold:
		\begin{equation}
		e_{1}\cdots e_{k}(V_{p}e_{r}+V_{r}e_{p})=2e_{1}\cdots e_{k}(V_{p}W_{r}+V_{r}W_{p}), \text{ for all $r$, $p>k$} 
		\label{eq1}
		\end{equation}     
\end{proposition}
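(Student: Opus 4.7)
My plan is to follow the strategy of Proposition~\ref{auto tipo 2}, but I have to work harder because the index set $I=I_\beta=\{1,\ldots,k\}$ is now finite, so the trick of picking a $t\in I$ outside the support of a given element is no longer available for free---this rigidity is exactly what will force the block $e_1\cdots e_k$ to appear. For $n\le k$ the conclusion $\varphi(e_n)=\pm e_n$ is immediate from the hypothesis $I_\beta=\{1,\ldots,k\}$. For $n>k$, I split $e_n=b_n+c_n$ according to the $\varphi$-grading $E=E_{0,\varphi}\oplus E_{1,\varphi}$; both summands are nonzero because $\varphi(e_n)\neq\pm e_n$, and one obtains $\varphi(e_n)=-e_n+2b_n$, with $b_n=a_n$.

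Applying $\varphi$ to the anticommutation $e_ne_t+e_te_n=0$ for $t\in\{1,\ldots,k\}$, and using $\varphi(e_t)=\pm e_t$, the identity $(-e_n+2b_n)e_t+e_t(-e_n+2b_n)=0$ reduces at once to $b_ne_t+e_tb_n=0$. Writing $b_n=b_n^e+b_n^o$ (even- and odd-length parts in the natural grading) and decomposing each piece further according to whether $e_t$ appears, the same monomial analysis as in Proposition~\ref{auto tipo 2} shows that the even-length summands of $b_n$ not containing $e_t$ must vanish. Since this holds for every $t=1,\ldots,k$, every monomial occurring in $b_n^e$ must contain all of $e_1,\ldots,e_k$, so one can write $b_n^e=e_1\cdots e_kV_n$ with $V_n$ free of the generators $e_1,\ldots,e_k$; since $b_n^e$ has even length, $|V_n|\equiv k\pmod 2$. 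Setting $W_n=b_n^o$ yields $b_n=e_1\cdots e_kV_n+W_n$ and the announced formula for $\varphi(e_n)$.

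The dichotomy then appears naturally: if every $V_n$ vanishes, then $a_n=W_n\in E_{(1)}$ for all $n>k$, which by the criterion recalled just after the definition of canonical type means that $\varphi$ is of canonical type; otherwise we are in the second alternative of the proposition. The assertion that in this second case \emph{every} $V_n$ is nonzero would require a separate argument, which I expect to obtain by feeding a vanishing $V_{n_1}$ together with some $V_{n_0}\neq 0$ into (\ref{eq1}) to get $e_1\cdots e_kV_{n_0}\varphi(e_{n_1})=0$, and then extracting a contradiction by applying $\varphi$ and using its invertibility together with $V_{n_0}\neq 0$. This is the step I expect to cost the most effort.

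Finally, to derive (\ref{eq1}) I would apply $\varphi$ to $e_pe_r+e_re_p=0$ for $p,r>k$. Expanding $\varphi(e_p)\varphi(e_r)+\varphi(e_r)\varphi(e_p)=0$ with $\varphi(e_n)=-e_n+2b_n$, the $e_pe_r+e_re_p$ terms cancel and one is left with
\[
(e_pb_r+b_re_p)+(b_pe_r+e_rb_p)=2(b_pb_r+b_rb_p).
\]
Substituting $b_n=e_1\cdots e_kV_n+W_n$ and using the parity identity $ux=(-1)^{|u||x|}xu$ for length-parity-homogeneous elements: on the left, each $W_n$-with-$e_m$ contribution cancels since $W_n$ is of odd length, while each $V_n$-with-$e_m$ contribution doubles after pushing $e_p$ or $e_r$ past $e_1\cdots e_kV_n$ (the two sign factors $(-1)^k$ from crossing $e_1\cdots e_k$ and $V_n$ annihilate, since $|V_n|\equiv k\pmod 2$). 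On the right, the $(e_1\cdots e_kV_p)(e_1\cdots e_kV_r)$ pieces vanish because the block $e_1\cdots e_k$ would appear squared, and $W_pW_r+W_rW_p=0$ because odd-length parity elements anticommute; what survives combines into $4e_1\cdots e_k(V_pW_r+V_rW_p)$. Equating the two sides and cancelling a factor of $2$ gives exactly (\ref{eq1}).
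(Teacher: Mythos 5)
Your argument is essentially identical to the paper's: the same decomposition $\varphi(e_n)=-e_n+2a_n$ with $a_n=a_n^{e}+a_n^{o}$, the same use of $e_t a_n^{e}=0$ for $t\in\{1,\ldots,k\}$ to force $a_n^{e}=e_1\cdots e_kV_n$ with $|V_n|\equiv k\pmod 2$, and the same expansion of $\varphi(e_p)\varphi(e_r)+\varphi(e_r)\varphi(e_p)=0$ to obtain (\ref{eq1}). The one step you flag as incomplete --- that in the second alternative \emph{every} $V_n$ is nonzero --- is not argued in the paper either: its proof only distinguishes ``$V_n=0$ for all $n>k$'' from ``otherwise,'' so on that point you are, if anything, more careful than the source.
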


\begin{proof}
As in the previous arguments we can write, for $n>k$,
\[
\varphi(e_{n})=-e_{n}+2a_{n}
\]
where each $a_{n}=(e_n+\varphi(e_n))/2$ is invariant under $\varphi$, that is $\varphi(a_n)=a_n$.  Once again we denote $a_{n}=a_{n}^{e}+a_{n}^{o}$ where $a_{n}^{e}$ and $a_{n}^{o}$ are the even and the odd part of $a_{n}$, respectively.
  For each $t\in\{1,\ldots,k\}$ and for each $n > k$, we have $e_{t}e_{n}+e_{n}e_{t}=0$. 
Therefore $\varphi(e_{t})\varphi(e_{n})+\varphi(e_{n})\varphi(e_{t})=0$ and thus $e_{t}a_{n}^{e}=0$. 
 
Hence either $a_{n}^{e}=0$ or each monomial in the part $a_{n}^{e}$ has $e_{t}$ as a factor. In both cases, since $t=1$, \dots, $k$ we obtain 
	\[
	a_{n}^{e}=e_{1}\cdots e_{k}V_{n}
	\]
where either $V_{n}=0$ or the monomials in $V_{n}$ have no factors among $e_{1}$, \dots, $e_{k}$, and all of  them are of even length if $k$ is even, or of odd length, whenever $k$ is odd.

Now we consider $n > k$ and define $W_{n}=a_{n}^{o}$. In this case 
	\[
	\varphi(e_{n})=-e_{n}+2e_{1}\cdots e_{k}V_{n}+2W_{n}
\]
where each $W_{n}$ is a linear combination of odd monomials and $V_{n}$ is a linear combination of even or odd monomials.  

If $p$, $r>k$, we have 
	\[
	\varphi(e_{p})\varphi(e_{r})+\varphi(e_{r})\varphi(e_{p})=0
	\]
and then
\begin{eqnarray*}
&&(-e_{p}+2e_{1}\cdots e_{k}V_{p}+2W_{p})(-e_{r}+2e_{1}\cdots e_{k}V_{r}+2W_{r})\\
&&+(-e_{r}+2e_{1}\cdots e_{k}V_{r}+2W_{r})(-e_{p}+2e_{1}\cdots e_{k}V_{p}+2W_{p})=0.
\end{eqnarray*}    
 Hence 
 \begin{eqnarray*}
&&-2e_{p}e_{1}\cdots e_{k}V_{r}-2e_{1}\cdots e_{k}V_{p}e_{r}+4e_{1}\cdots e_{k}V_{p}W_{r}+4W_{p}e_{1}\cdots e_{k}V_{r}\\
&&-2e_{r}e_{1}\ldots e_{k}V_{p}-2e_{1}\cdots e_{k}V_{r}e_{p}+4e_{1}\cdots e_{k}V_{r}W_{p}+4W_{r}e_{1}\cdots e_{k}V_{p}=0
\end{eqnarray*}
and in this way
	\[
	-4e_{p}e_{1}\cdots e_{k}V_{r}-4e_{r}e_{1}\cdots e_{k}V_{p}+8e_{1}\cdots e_{k}V_{p}W_{r}+8e_{1}\cdots e_{k}V_{r}W_{p}=0.
	\]
All these computations yield the relation:
\[
e_{1}\cdots e_{k}(V_{p}e_{r}+V_{r}e_{p})=2e_{1}\cdots e_{k}(V_{p}W_{r}+V_{r}W_{p}), \text{ for all $p$, $r>k$}.
\]
We have to examine separately the following two cases:
\begin{itemize}
\item[Case 1.] If $V_{n}=0$, for all $n>k$, then clearly $\varphi$ is of canonical type.	
\item[Case 2.]
Otherwise we have that $\varphi$ is defined by
\[\varphi(e_{n})=\begin{cases}\pm e_{n},\text{ if } 1\leq n\leq k \\ -e_{n}+2e_{1}\cdots e_{k}V_{n}+2W_{n}, \text{ if } n > k
	\end{cases}\]
where $\{V_{n}\}_{n > k}$ and $\{W_{n}\}_{n > k}$ both satisfy the conditions stated in the proposition. Furthermore the  following relations hold
\[
e_{1}\cdots e_{k}(V_{p}e_{r}+V_{r}e_{p})=2e_{1}\cdots e_{k}(V_{p}W_{r}+V_{r}W_{p}).
\]
	\end{itemize}
Therefore the proof is now complete.   
\end{proof}

\subsection{Concrete superalgebras of type 3}

Now we exhibit a method that provides us with automorphisms of $E$ of type 3. We shall see that they need not be necessarily of canonical type.

Let $t$, $k$ be integers such that $t$ is odd and $k\in\{0,1,\ldots\}$. We shall denote by $I$ the set $I=\{1,\ldots,k,k+1,\ldots, k+t\}\subset\mathbb{N}$. For each $i\in I$  we define the action $\varphi(e_{i})=\pm e_{i}$ so that $I^{+}=\{1,\ldots,k\}$ and $I^{-}=\{k+1,\ldots,k+t\}$. For $n>k+t$ we put
\[
\varphi(e_{n})=-e_{n}+2e_{1}\cdots e_{k}e_{k+1}\cdots e_{k+t}e_{n}.
\]
Hence the action of $\varphi$  on the generators is given by 
\[
\varphi(e_{n})=\begin{cases} e_{n},\text{ if } 1\leq n\leq k \\ -e_{n}, \text{ if } k+1\leq n\leq k+t \\ -e_{n}+2e_{1}\cdots e_{k}e_{k+1}\cdots e_{k+t}e_{n}, \text{ otherwise }
\end{cases}.
\]
Now we note that
\begin{itemize}
	\item $\varphi$ preserves the relations $e_{i}e_{j}+e_{j}e_{i}=0$ and so it can be extended to a homomorphism of $E$.
	\item For $n>k+t$
	\begin{eqnarray*}
\varphi^{2}(e_{n})&=&\varphi(-e_{n}+2e_{1}\cdots e_{k}e_{k+1}\cdots e_{k+t}e_{n})\\
	&=&-(-e_{n}+2e_{1}\cdots e_{k}e_{k+1}\cdots e_{k+t}e_{n})\\
	&&+2e_{1}\cdots e_{k}(-1)^{t}e_{k+1}\cdots e_{k+t}(-e_{n}+2e_{1}\cdots e_{k}e_{k+1}\cdots e_{k+t}e_{n}).
	\end{eqnarray*}
	
Therefore $\varphi^{2}(e_{n})=e_{n}$.
\end{itemize}

Thus $\varphi$ is an automorphism of order 2 on $E$. The induced superalgebra  $E_{\varphi}=E_{0,\varphi}\oplus E_{1,\varphi}$ has its homogeneous components described by
\begin{itemize}
	\item $E_{0,\varphi}\supset span_{F}\{e_{1},\ldots, e_{k}\},$
	\item $E_{1,\varphi}\supset span_{F}\{e_{k+1},\ldots, e_{k+t}, (e_{n}-e_{1}\cdots e_{k}e_{k+1}\cdots e_{k+t}e_{n})\mid n>k+t\}.$
\end{itemize}
We call the method just described the \textsl{method $2$} for obtaining Grassmann superalgebras. Whenever $k$ is an even integer, the automorphism $\varphi$ is not of canonical type. Nevertheless, as we shall see below, the  superalgebras obtained in this way will be isomorphic to homogeneous ones. 

\begin{proposition}
Let $E_{\varphi}$ be a $\mathbb{Z}_{2}$-grading obtained according to method $2$. Then $E_{\varphi}$ and $E_{k}$ are isomorphic as $\mathbb{Z}_{2}$-graded algebras.  
\end{proposition}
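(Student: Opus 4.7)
The plan is to define a $\mathbb{Z}_{2}$-graded homomorphism $f\colon E_{k}\to E_{\varphi}$ directly on the generators and check that it respects the defining relations of $E$. Writing $P=e_{1}\cdots e_{k+t}$, I set
\[
f(e_{i})=e_{i} \text{ for } i\le k+t, \qquad f(e_{n})=e_{n}-Pe_{n} \text{ for } n>k+t.
\]
By the explicit descriptions of the homogeneous components recalled just before the proposition, every generator of $E_{k}$ is sent to a homogeneous element of $E_{\varphi}$ of the same $\mathbb{Z}_{2}$-degree, so once $f$ is shown to be a well-defined algebra homomorphism it will automatically be graded.

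The key computation is to verify $f(e_{i})f(e_{j})+f(e_{j})f(e_{i})=0$ for every $i,j$ (including $f(e_{i})^{2}=0$). I would split into three cases according to the sizes of $i,j$ relative to $k+t$. The case $i,j\le k+t$ is immediate. When $i\le k+t$ and $j>k+t$, the generator $e_{i}$ already occurs as a factor of $P$, so $Pe_{i}=e_{i}P=0$ and the only surviving contribution is $e_{i}e_{j}+e_{j}e_{i}=0$. For $i,j>k+t$ the essential identity is $e_{i}P=(-1)^{k+t}Pe_{i}$, combined with $P^{2}=0$; a short expansion yields
\[
(e_{i}-Pe_{i})(e_{j}-Pe_{j})=e_{i}e_{j}-\bigl((-1)^{k+t}+1\bigr)Pe_{i}e_{j},
\]
and symmetrising in $i,j$ kills everything regardless of the parity of $k+t$. (The hypothesis that $t$ is odd, which governed the construction, reappears implicitly through $\varphi(P)=-P$, ensuring that the elements $e_{n}-Pe_{n}$ truly lie in $E_{1,\varphi}$.)

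Next I exhibit an explicit inverse $g\colon E_{\varphi}\to E_{k}$ defined by
\[
g(e_{i})=e_{i} \text{ for } i\le k+t, \qquad g(e_{n})=e_{n}+Pe_{n} \text{ for } n>k+t,
\]
and repeat the same anticommutation check (with one sign reversed) to show that $g$ extends to an algebra homomorphism. A direct substitution on generators, using once more $P^{2}=0$, gives $f\circ g=g\circ f=\operatorname{id}$ on the $e_{i}$, hence on all of $E$, so $f$ is a bijection and therefore a $\mathbb{Z}_{2}$-graded isomorphism.

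I expect the main technical obstacle to be the $i,j>k+t$ case of the anticommutation check, because this is the only place where the long product $P$ appears on both sides of a product and one must track the sign $(-1)^{k+t}$ carefully while exploiting $P^{2}=0$. Everything else — degree preservation, the mixed case using $e_{i}P=0$, and the verification that $f$ and $g$ are mutual inverses — is routine.
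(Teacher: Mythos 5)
Your proposal is correct and follows essentially the same route as the paper: the same map $f(e_{n})=e_{n}-Pe_{n}$ for $n>k+t$ (identity on the other generators), the same explicit inverse $g(e_{n})=e_{n}+Pe_{n}$, and the same verification via $P^{2}=0$ that they are mutually inverse graded homomorphisms. The only difference is that you carry out in detail the anticommutation check that the paper declares ``easy to check,'' and your sign bookkeeping there is accurate.
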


\begin{proof}
	Let $f_{\varphi}\colon E_{k}\longrightarrow E_{\varphi}$ be the function defined on the generators by
		\[f_{\varphi}(e_{n})=\begin{cases} e_{n},\text{ if } 1\leq n\leq k \\ e_{n}, \text{ if } k+1\leq n\leq k+t \\ e_{n}-e_{1}\cdots e_{k}e_{k+1}\cdots e_{k+t}e_{n}, \text{ otherwise }
	\end{cases}.\]
It is easy to check that $f_{\varphi}$ preserves the relations $e_{i}e_{j}+e_{j}e_{i}=0$ and thus we can extend $f_{\varphi}$ to an endomorphism of the algebra $E$. Moreover
\begin{eqnarray*}
f_{\varphi}(e_{1}\cdots e_{k}e_{k+1}\cdots e_{k+t}e_{n}) &=& e_{1}\cdots e_{k}e_{k+1}\cdots e_{k+t}f_{\varphi}(e_{n})\\
&=&e_{1}\cdots e_{k}e_{k+1}\cdots e_{k+t}(e_{n}-e_{1}\cdots e_{k}e_{k+1}\cdots e_{k+t}e_{n})\\
&=&e_{1}\cdots e_{k}e_{k+1}\cdots e_{k+t}e_{n}.
\end{eqnarray*}
Now we consider the homomorphism $g_{\varphi}\colon E_{\varphi}\to E_{k}$ defined by
\[
g_{\varphi}(e_{n})=\begin{cases} e_{n},\text{ if } 1\leq n\leq k \\ e_{n}, \text{ if } k+1\leq n\leq k+t \\ e_{n}+e_{1}\cdots e_{k}e_{k+1}\cdots e_{k+t}e_{n}, \text{ otherwise }
\end{cases}.
\]
By using the same argument as above we obtain 
\[
g_{\varphi}(e_{1}\cdots e_{k}e_{k+1}\cdots e_{k+t}e_{n})=e_{1}\cdots e_{k}e_{k+1}\cdots e_{k+t}e_{n}.
\]
Therefore if $n>k+t$ we conclude that
\begin{eqnarray*}
f_{\varphi}\circ g_{\varphi}(e_{n})&=&f_{\varphi}(e_{n}+e_{1}\cdots e_{k}e_{k+1}\cdots e_{k+t}e_{n})\\
&=&(e_{n}-e_{1}\cdots e_{k}e_{k+1}\cdots e_{k+t}e_{n})+e_{1}\cdots e_{k}e_{k+1}\cdots e_{k+t}e_{n}=e_{n}.
\end{eqnarray*}
We also obtain that $g_{\varphi}\circ f_{\varphi}(e_{n})=e_{n}$, and thus we prove that $f_{\varphi}$ is an automorphism of $E$. By construction the automorphism $f_{\varphi}$ is $\mathbb{Z}_{2}$-graded. Therefore the superalgebras $E_{\varphi}$ and $E_{k}$ are $\mathbb{Z}_{2}$-isomorphic, and our proof is complete. 
\end{proof}

\begin{proposition}
\label{prop_minus}
Let $\varphi\colon E\to E$ be the homomorphism defined by its action on the generators of $E$:	
	\[
	\varphi(e_{n})=\begin{cases} -e_{1},\text{ if }  n=1 \\ -e_{n}+2e_{1}e_{n}, \text{ if }  n>1
	\end{cases}.
	\]
Then $\varphi$ is an automorphism of order 2, and $E_{\varphi}$ and $E_{can}$ are $\mathbb{Z}_{2}$-isomorphic. 
\end{proposition}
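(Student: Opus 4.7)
My plan is to mirror the strategy used for Method 1 and Method 2 earlier in the paper: first verify that $\varphi$ is a well-defined automorphism of order two, then exhibit an explicit $\mathbb{Z}_2$-graded isomorphism $f_\varphi\colon E_{can}\to E_\varphi$ together with its inverse, check it preserves the anticommutation relations and the grading, and confirm that the two maps are mutually inverse.

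\smallskip

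\textbf{Step 1 (well-definedness and order two).} To see that $\varphi$ extends to an endomorphism of $E$, it suffices to verify $\varphi(e_i)\varphi(e_j)+\varphi(e_j)\varphi(e_i)=0$ for all $i$, $j$. The case $i=j=1$ is immediate since $\varphi(e_1)^2=e_1^2=0$. For $i=1<j$, a short computation using $e_1^2=0$ gives $\varphi(e_1)\varphi(e_j)=e_1e_j$ and $\varphi(e_j)\varphi(e_1)=-e_1e_j$. For $i$, $j>1$, expanding $(-e_i+2e_1e_i)(-e_j+2e_1e_j)$ and using $e_1^2=0$ together with $e_ie_1=-e_1e_i$ collapses everything to $e_ie_j$, and the symmetric product yields $e_je_i$. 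Hence the relation holds. Then $\varphi^2(e_1)=\varphi(-e_1)=e_1$, and for $n>1$ one computes
\[
\varphi^{2}(e_n)=-(-e_n+2e_1e_n)+2(-e_1)(-e_n+2e_1e_n)=e_n-2e_1e_n+2e_1e_n-4e_1^2e_n=e_n,
\]
so $\varphi^2=1$ and $\varphi\in\mathrm{Aut}(E)$.

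\smallskip

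\textbf{Step 2 (construction of $f_\varphi$).} Following the Method 1 template, put $a_n=(e_n+\varphi(e_n))/2$, which here equals $e_1e_n$ for $n>1$. Then $e_n-a_n=e_n-e_1e_n$ is the $-1$-eigenvector of $\varphi$ coming from $e_n$. Define
\[
f_{\varphi}(e_{n})=\begin{cases} e_{n},& n=1, \\ e_{n}-e_{1}e_{n},& n>1. \end{cases}
\]
I will check that $f_\varphi(e_i)f_\varphi(e_j)+f_\varphi(e_j)f_\varphi(e_i)=0$ for all $i$, $j$; the argument is virtually identical to Step 1 (again the only simplifications needed are $e_1^2=0$ and $e_ie_1=-e_1e_i$). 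Thus $f_\varphi$ extends to an algebra homomorphism $E\to E$.

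\smallskip

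\textbf{Step 3 (gradedness).} In $E_{can}$ every generator has degree $1$, so I need $f_\varphi(e_n)\in E_{1,\varphi}$ for every $n$. For $n=1$ this is the definition. For $n>1$ a direct check gives $\varphi(e_n-e_1e_n)=(-e_n+2e_1e_n)-(-e_1)(-e_n+2e_1e_n)=-(e_n-e_1e_n)$, as required. Because $E$ is generated by the $e_n$ and $f_\varphi$ sends each generator to a homogeneous element of degree $1$, the map $f_\varphi$ respects the $\mathbb{Z}_2$-grading on all of $E$.

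\smallskip

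\textbf{Step 4 (invertibility).} Define $g_\varphi\colon E_\varphi\to E_{can}$ by $g_\varphi(e_1)=e_1$ and $g_\varphi(e_n)=e_n+e_1e_n$ for $n>1$. The same anticommutation check (with the sign flipped in one spot) shows $g_\varphi$ extends to an algebra homomorphism. Then on generators, $g_\varphi(f_\varphi(e_n))=g_\varphi(e_n-e_1e_n)=(e_n+e_1e_n)-e_1(e_n+e_1e_n)=e_n$ and similarly $f_\varphi(g_\varphi(e_n))=e_n$; both computations use only $e_1^2=0$. So $f_\varphi$ is a $\mathbb{Z}_2$-graded isomorphism and $E_\varphi\simeq E_{can}$.

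\smallskip

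\textbf{Main obstacle.} There is no deep obstruction: everything reduces to bookkeeping with the identities $e_1^2=0$ and $e_ie_1=-e_1e_i$. The only mildly subtle point is checking that the ``correction term'' $e_1e_n$ does not disturb the anticommutation relations among the $f_\varphi(e_n)$ for $n$, $n'>1$; this works precisely because $e_1^2=0$ kills the cross term $e_1e_ie_1e_{n'}$ that would otherwise appear. Once this is confirmed, the rest of the argument is the same formal pattern used in the Method 1 and Method 2 propositions above.
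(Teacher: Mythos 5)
Your proof is correct and follows essentially the same route as the paper: the paper disposes of this proposition in one line by observing that this $\varphi$ is the Method 2 construction with $k=0$ and $t=1$, so the preceding proposition (explicit graded isomorphism $f_\varphi(e_n)=e_n-e_1\cdots e_{k+t}e_n$ with inverse $g_\varphi$) applies verbatim. Your Steps 1--4 simply carry out that same verification concretely in this special case, and all the computations check out.
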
	
\begin{proof}
The proof follows from the construction of method $2$.
\end{proof}
\begin{obs}
\noindent Let $E_{\varphi}=E_{0,\varphi}\oplus E_{1,\varphi}$ be the superalgebra defined in Proposition \ref{prop_minus}. It is immediate that its homogeneous components are as follows:
	\begin{enumerate}
		\item $E_{1,\varphi}\supset span_{F}\{e_{1}, (e_{n}-e_{1}e_{n}) \mid n>1\}$,
		\item $E_{0,\varphi}=Z(E)$.
	\end{enumerate}
In this case just one of the generators, namely $e_1$, of $E$ is homogeneous. Moreover the automorphism $\varphi$ is not of canonical type. However Proposition \ref{prop_minus} guarantees that $E_{\varphi}$ is isomorphic to the natural $\mathbb{Z}_{2}$-grading $E_{can}$. 
\end{obs}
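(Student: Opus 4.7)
The plan is to recognize $\varphi$ as a special instance of method $2$ and directly invoke the preceding proposition. Specifically, I would set $k=0$ and $t=1$ in method $2$; then $I=\{1\}$, $I^{+}=\emptyset$, $I^{-}=\{1\}$, and interpreting the empty product $e_{1}\cdots e_{k}$ as $1$, the recipe produces $\varphi(e_{1})=-e_{1}$ and $\varphi(e_{n})=-e_{n}+2e_{1}e_{n}$ for $n>1$, which coincides exactly with the map in the statement. Since method $2$ was already verified to yield order-$2$ automorphisms of $E$ (the check $\varphi^{2}(e_{n})=e_{n}$ being spelled out in the general case), the first assertion follows with no additional work.

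For the second assertion, the previous proposition asserts that the superalgebra $E_{\varphi}$ arising from method $2$ with parameters $k$, $t$ is $\mathbb{Z}_{2}$-isomorphic to $E_{k}$, via the explicit map $f_{\varphi}$ displayed there. With $k=0$ the target is $E_{0}$, i.e.\ the homogeneous grading in which every generator $e_{i}$ has degree $1$; but this is precisely the natural grading $E_{can}$, since then the even part is spanned by $1$ together with all monomials of even length, and the odd part by all monomials of odd length. Composing with this identification yields the desired $\mathbb{Z}_{2}$-isomorphism $E_{\varphi}\simeq E_{can}$.

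The only step that requires genuine attention is the $k=0$ boundary case of the previous proposition. I would verify that the formulas for $f_{\varphi}$ and $g_{\varphi}$, as well as the identities $f_{\varphi}\circ g_{\varphi}=\mathrm{id}$ and $g_{\varphi}\circ f_{\varphi}=\mathrm{id}$, remain valid when the factor $e_{1}\cdots e_{k}$ is interpreted as $1$. Concretely, $f_{\varphi}(e_{1}e_{n})=e_{1}(e_{n}-e_{1}e_{n})=e_{1}e_{n}$ because $e_{1}^{2}=0$, and the analogous cancellation holds for $g_{\varphi}$; these are the only computations not purely notational. I expect no obstacle beyond writing them out, so the proposition is essentially a corollary of the general statement for method $2$ rather than an independent result.
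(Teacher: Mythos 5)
Your proposal is correct and follows the paper's own route exactly: the paper's entire proof of Proposition \ref{prop_minus} is the one-line reduction to method $2$ (here with $k=0$, $t=1$), and your check of the $k=0$ boundary case, including the identification $E_{0}=E_{can}$, merely fills in what the paper leaves implicit. The few items of the remark you do not spell out --- the spanning sets of the homogeneous components, the equality $E_{0,\varphi}=Z(E)$, and the non-canonicity of $\varphi$ (since $a_{n}=e_{1}e_{n}\notin E_{(1)}$) --- all follow in one line from the method-$2$ description together with the fact that the graded isomorphism $f_{\varphi}$ is an algebra automorphism of $E$ and therefore carries $Z(E)=E_{(0)}$ onto $Z(E)$.
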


\section{Gradings and automorphisms of type 4}
In this section we consider automorphisms of type $4$. Recall that these are automorphisms such that for \textsl{every} basis $\beta$ of the underlying vector space $L$, one has $I_\beta=\emptyset$. Here $I_\beta$ are the eigenvectors of $\varphi$ belonging to $\beta$. (We recall that one may change the basis of $L$ so that $\varphi(e_n)=\pm e_n+$ a linear combination of monomials of higher degree.)  We shall prove that such structures do not exist in quite many cases.

\begin{proposition}
	There do not exist automorphisms of type 4 defined by
	\[
	\varphi(e_{n})=-e_{n}+b_{n}
\]
	where the $b_{n}\in E$ are nonzero monomials for every $n\in\mathbb{N}$, possibly multiplied by some (nonzero) scalars.
\end{proposition}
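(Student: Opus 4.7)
The plan is to distill $\varphi^2=1$ into rigid combinatorial data about the supports of the monomials $b_n$, and then close with a cardinality contradiction. Write $b_n=\alpha_n m_n$, where $m_n$ is a sorted nonzero monomial and $\alpha_n\in F^{\times}$; the identity $\varphi^2(e_n)=e_n$ is readily seen to be equivalent to $\varphi(b_n)=b_n$ for every $n$.

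First I would establish that every $m_n$ has even length $k_n\geq 2$. If some $b_n=\alpha_n e_j$ had length $1$, then $\varphi(b_n)=b_n$ would immediately force $\varphi(e_j)=e_j$, so $j\in I_\beta$, contradicting type~$4$. Hence $k_n\geq 2$ for all $n$. Expanding $\varphi(m_n)=\prod_\ell(-e_{i_\ell}+b_{i_\ell})$ with $m_n=e_{i_1}\cdots e_{i_{k_n}}$, every term in which at least one factor $b_{i_\ell}$ is chosen has length at least $k_n+1$, because $k_{i_\ell}\geq 2$. So the only length-$k_n$ summand is $(-1)^{k_n}m_n$, and equating with the length-$k_n$ part of $m_n$ forces $k_n$ even. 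In particular each $b_n$ lies in $E_{(0)}=Z(E)$.

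Since $\varphi$ preserves $Z(E)$, $\varphi(e_ie_j)$ is central for every $i,j$. Expanding $\varphi(e_i)\varphi(e_j)=(-e_i+b_i)(-e_j+b_j)=e_ie_j-e_ib_j-b_ie_j+b_ib_j$ and observing that the first and last summands are even while the middle two are odd, the odd component must vanish, yielding
\[
e_ib_j+b_ie_j=0\qquad\text{for all }i,j\in\mathbb{N}.
\]
Letting $S_n$ denote the support of $m_n$, I would then extract three consequences: (a) setting $i=j$ gives $i\in S_i$ (otherwise $2e_ib_i\neq 0$); (b) if $i\in S_j$ then $e_ib_j=0$, so $b_ie_j=0$ and hence $j\in S_i$, giving the symmetry $i\in S_j\Leftrightarrow j\in S_i$; (c) if $i\neq j$ with $i\notin S_j$ (equivalently $j\notin S_i$), then $e_ib_j$ and $b_ie_j$ are nonzero scalar multiples of the sorted monomials on the sets $\{i\}\cup S_j$ and $S_i\cup\{j\}$ respectively, and their sum can vanish only if these sets agree, i.e.\ $S_i\setminus\{i\}=S_j\setminus\{j\}$.

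The endgame is a finite-cardinality count. Fix any $n$; by the previous steps the set $T_n=S_n\setminus\{n\}$ is nonempty, so pick some $k\in T_n$. For every $j\neq n$ with $j\notin S_n$, constraint~(c) gives $T_j=T_n$, hence $k\in S_j$, and by~(b) then $j\in S_k$. Thus $\{j\neq n:j\notin S_n\}\subseteq S_k$, a finite set, so combined with $S_n$ finite the partition
\[
\mathbb{N}=\{n\}\cup T_n\cup\{j\neq n:j\notin S_n\}
\]
expresses $\mathbb{N}$ as a finite union of finite sets, a contradiction. The main obstacle in the plan is step~(c): one must track the signs coming from reordering $e_ib_j$ and $b_ie_j$ into sorted form in order to justify that cancelation forces the equality of the two underlying supports; once this sign bookkeeping is in place, the remaining finiteness count is elementary.
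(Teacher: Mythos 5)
Your proof is correct and follows essentially the same route as the paper's: you establish that each $b_n$ has even length at least $2$ (by comparing lowest-length terms in $\varphi(b_n)=b_n$), that $e_n$ divides $b_n$, and that the relation $e_ib_j+b_ie_j=0$ forces the supports $S_i\setminus\{i\}$ and $S_j\setminus\{j\}$ to coincide for non-adjacent pairs --- exactly the combinatorial skeleton of the paper's argument, with your derivation via centrality of $\varphi(e_ie_j)$ replacing the paper's anticommutator computation. The only real divergence is the endgame: the paper pins down a generator $e_{m_1}$ with $\varphi(e_{m_1})=-e_{m_1}$ (an eigenvector, contradicting type $4$), whereas you express $\mathbb{N}$ as a finite union of finite sets; both closings are valid, and yours is arguably the cleaner of the two.
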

\begin{proof}
Let us suppose that for every $n\in\mathbb{N}$, one has
\[
\varphi(e_{n})=-e_{n}+b_{n}
\]
where $b_{n}=e_{n_{1}}\cdots e_{n_{k_{n}}}\neq 0$ is a monomial such that $\varphi(b_{n})=b_{n}$ for every $n\in\mathbb{N}$.
	 
Since $\varphi$ is of type 4, it follows that $b_{n}$ is of length $k_{n}=|b_{n}|>1$, for all $n\in\mathbb{N}$. Furthermore 
\[
e_{n_{1}}\cdots e_{n_{k_{n}}}=\varphi(b_{n})=(-e_{n_{1}}+b_{n_{1}})\cdots (-e_{n_{k_{n}}}+b_{n_{k_{n}}}).
\]
Hence one obtains immediately that 
\[
e_{n_{1}}\cdots e_{n_{k_{n}}}=(-1)^{k_{n}}e_{n_{1}}\cdots e_{n_{k_{n}}}+\sum_{|m_{n}|>k_{n}} {m_{n}}.
\]
It follows that $k_{n}$ is an even integer and therefore $b_{n}\in Z(E)$.

On the other hand, due to the equality $(-e_{n}+b_{n})^{2}=0$, we conclude that $e_{n}$ must be a factor of $b_{n}$. We may assume that $e_{n}$ occurs in the first position of the monomial $b_n$. Rearranging the indices and with certain abuse of notation we can write
\[
\varphi(e_{n})=-e_{n}+e_{n}e_{n_{1}}\cdots e_{n_{k_{n}}}
\]
where $k_{n}$ is an odd integer. 

Consequently by direct computation we obtain 
\begin{eqnarray*}
\varphi(e_{n})\varphi(e_{m})&=&(-e_{n}+e_{n}e_{n_{1}}\cdots e_{n_{k_{n}}})(-e_{m}+e_{m}e_{m_{1}}\cdots e_{m_{k_{m}}})\\
&=&e_{n}e_{m}-e_{n}e_{m}e_{m_{1}}\cdots e_{m_{k_{m}}}-e_{n}e_{n_{1}}\cdots e_{n_{k_{n}}}e_{m} \\
&&+2e_{n}e_{n_{1}}\cdots e_{n_{k_{n}}}e_{m}e_{m_{1}}\cdots e_{m_{k_{m}}},
\end{eqnarray*}
and analogously 
\begin{eqnarray*}
\varphi(e_{m})\varphi(e_{n})&=& (-e_{m}+e_{m}e_{m_{1}}\cdots e_{m_{k_{m}}})(-e_{n}+ e_{n}e_{n_{1}}\cdots e_{n_{k_{n}}})\\
&=&e_{m}e_{n}-e_{m}e_{n}e_{n_{1}}\cdots e_{n_{k_{n}}}-e_{m}e_{m_{1}}\cdots e_{m_{k_{m}}}e_{n}  \\
&&+2e_{m}e_{m_{1}}\cdots e_{m_{k_{m}}}e_{n}e_{n_{1}}\cdots e_{n_{k_{n}}}.
\end{eqnarray*}
Then the equality $\varphi(e_{n})\varphi(e_{m})+ \varphi(e_{m})\varphi(e_{n})=0$ implies 
\begin{eqnarray*}
&&-e_{n}e_{m}e_{m_{1}}\cdots e_{m_{k_{m}}}- e_{n}e_{n_{1}}\cdots e_{n_{k_{n}}}e_{m}+ 2e_{n}e_{n_{1}}\cdots e_{n_{k_{n}}}e_{m}e_{m_{1}}\cdots e_{m_{k_{m}}}\\
&&- e_{m}e_{n}e_{n_{1}}\cdots e_{n_{k_{n}}}- e_{m}e_{m_{1}}\cdots e_{m_{k_{m}}}e_{n}+ 2e_{m}e_{m_{1}}\cdots e_{m_{k_{m}}}e_{n}e_{n_{1}}\cdots e_{n_{k_{n}}}=0.
\end{eqnarray*}
Therefore
\[
-e_{n}e_{m}e_{m_{1}}\cdots e_{m_{k_{m}}}-e_{m}e_{n}e_{n_{1}}\cdots e_{n_{k_{n}}}+ 2e_{m}e_{m_{1}}\cdots e_{m_{k_{m}}}e_{n}e_{n_{1}}\cdots e_{n_{k_{n}}}=0.
\]
Let us fix the integer $m$ and choose $n\notin\{m_{1},\ldots, m_{k_{1}}\}$. Then we conclude that 
\[
e_{n_{1}}\cdots e_{n_{k_{n}}}=e_{m_{1}}\cdots e_{m_{k_{1}}}
\]
for every $n\notin\{m_{1},\ldots, m_{k_{1}}\}$.

In this case we have  $\varphi(e_{m_{1}})=-e_{m_{1}}+e_{m_{1}}P$, \dots,  $\varphi (e_{m_{k_1}}) = -e_{m_{k_1}}+e_{m_{k_1}}Q$, and $\varphi(e_{n})=-e_{n}+e_{n}e_{m_{1}}\cdots e_{m_{k_1}}$, for every $n\notin\{1_{1},\ldots, 1_{k_{1}}\}$ (here we denote by$P$, \dots, $Q$ some monomials in $E$).

Now we fix $m=m_1$. Choosing $n\notin\{m_{1},\ldots, m_{k_{1}}\} $ with $e_{n}\notin supp(P)$, and using the same idea as in the case $m=1$, we obtain	
\[e_{n_{1}}\cdots e_{n_{k_{n}}}=P.
\]
Therefore we conclude that $e_{n_{1}}\cdots e_{n_{k_{n}}}$ satisfies the equalities 
\[
P=e_{n_{1}}\cdots e_{n_{k_{n}}}=e_{m_{1}}\cdots e_{m_{k_{1}}}.
\]
Recall that $m_1$ is fixed and there exist infinitely many $n$ as above. Thus we conclude that $\varphi(e_{m_{1}})=-e_{m_{1}}+e_{m_{1}}P=-e_{m_{1}}+e_{m_{1}}(e_{m_{1}}\cdots e_{m_{k_{1}}})=-e_{m_{1}}$, which is a contradiction. The proposition is proved.
\end{proof}

Another way of constructing automorphisms of type 4 could be by translation by a constant. These are automorphisms defined by $\varphi(e_{n})=-e_{n}+2P$ where $P\in E$ is constant and contains some summand of length $\geq 2$. However, as we shall see in the next proposition, this also turns out to be impossible. 

\begin{proposition}
Let $\varphi$ be an automorphism of type 4 defined by the following action on the generators of $E$:
	\[
	\varphi(e_{n})=-e_{n}+2a_{n}, \qquad a_n=(e_n+\varphi(e_n))/2.
	\]
Then the set $T=\{a_{n}\mid n\in\mathbb{N}\}$ is linearly independent over $F$. 
\end{proposition}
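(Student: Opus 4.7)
The plan is to argue by contradiction: assume a nontrivial linear relation $\sum_{i=1}^{s}\lambda_i a_{n_i}=0$ with distinct $n_1,\ldots,n_s$ and some $\lambda_j\neq 0$, and use it to produce a basis $\gamma$ of $L$ with $I_\gamma\neq\emptyset$, contradicting the type 4 hypothesis.

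The first step is to translate the relation into one about eigenvectors of $\varphi$. Since $a_n=(e_n+\varphi(e_n))/2$, the identity $\sum\lambda_i a_{n_i}=0$ is equivalent to
\[
\varphi\!\left(\sum_{i=1}^{s}\lambda_i e_{n_i}\right)=-\sum_{i=1}^{s}\lambda_i e_{n_i}.
\]
Set $v=\sum_{i=1}^{s}\lambda_i e_{n_i}\in L$. Because the $e_{n_i}$ are distinct basis vectors and not all $\lambda_i$ vanish, $v\neq 0$; moreover $\varphi(v)=-v$, so $v$ is an eigenvector of $\varphi$ with eigenvalue $-1$ lying in $L$.

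The second step is to exchange $v$ into the basis. Choose an index $j$ with $\lambda_j\neq 0$ and form
\[
\gamma=\bigl(\beta\setminus\{e_{n_j}\}\bigr)\cup\{v\}.
\]
Since $v-\lambda_j e_{n_j}$ lies in the span of $\beta\setminus\{e_{n_j}\}$ and $\lambda_j\neq 0$, the set $\gamma$ is again a linear basis of $L$. By construction $v\in\gamma$ and $\varphi(v)=\pm v$, so $v\in I_\gamma$, i.e.\ $I_\gamma\neq\emptyset$.

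This contradicts the defining property of automorphisms of type 4, which requires $I_\gamma=\emptyset$ for \emph{every} basis $\gamma$ of $L$. Hence no nontrivial relation $\sum\lambda_i a_{n_i}=0$ exists, proving that $T$ is linearly independent. I do not foresee a real obstacle here; the only thing to be careful about is that the finite exchange of a single basis vector indeed produces a basis, which is immediate from $\lambda_j\neq 0$.
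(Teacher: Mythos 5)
Your proof is correct and follows essentially the same route as the paper: both translate the relation $\sum\lambda_i a_{n_i}=0$ into $\varphi(v)=-v$ for $v=\sum\lambda_i e_{n_i}\in L$ and then contradict the type~4 hypothesis by placing $v$ (if nonzero) into a basis of $L$. Your explicit verification that the exchange $\gamma=(\beta\setminus\{e_{n_j}\})\cup\{v\}$ yields a basis is a small added detail the paper leaves implicit.
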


\begin{proof}
	Let $k\in\mathbb{N}$ be a positive integer, $a_{i_{1}}$, \dots, $a_{i_{k}}\in T$ and $\lambda_{1}$, \dots, $\lambda_{k}\in F$ and assume that
	\[
	\lambda_{1}a_{i_{1}}+\cdots + \lambda_{k}a_{i_{k}}=0.
	\]
Then we have
\begin{eqnarray*}
\varphi(\lambda_{1}e_{i_{1}}+\cdots + \lambda_{k}e_{i_{k}})&=& \lambda_{1}(-e_{i_{1}}+2a_{i_{1}})+\cdots + \lambda_{k}(-e_{i_{k}}+2a_{i_{k}})\\ 
&=&-(\lambda_{1}e_{i_{1}}+\cdots + \lambda_{k}e_{i_{k}})+2(\lambda_{1}a_{i_{1}}+\cdots + \lambda_{k}a_{i_{k}}).
\end{eqnarray*}
In this way we obtain
	\[
	\varphi(\lambda_{1}e_{i_{1}}+\cdots + \lambda_{k}e_{i_{k}})=-(\lambda_{1}e_{i_{1}}+\cdots + \lambda_{k}e_{i_{k}}).
	\]
If the linear combination $\lambda_{1}e_{i_{1}}+\cdots + \lambda_{k}e_{i_{k}}$ does not vanish we can choose a basis $\gamma$ of the vector space $L$ containing the vector $\lambda_{1}e_{i_{1}}+\cdots + \lambda_{k}e_{i_{k}}$ (say as a first vector of the basis). But this is a contradiction because $\varphi$ is of  type 4. 
	
	Therefore $\lambda_{1}e_{i_{1}}+\cdots + \lambda_{k}e_{i_{k}}=0$ and $\lambda_{1}=\cdots= \lambda_{k}=0$. This proves that the set $T$ is linearly independent. 	
\end{proof}

\begin{corollary}
There does not exist automorphism of type 4 which is a translation by a constant.
\end{corollary}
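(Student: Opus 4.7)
The plan is to derive a contradiction directly from the previous proposition, which asserts that for a type 4 automorphism $\varphi(e_n)=-e_n+2a_n$, the family $\{a_n\}_{n\in\mathbb{N}}$ is linearly independent over $F$ in the sense that any finite selection indexed by distinct integers yields linearly independent elements. The key observation is that if the translation is by a single constant $P$, then every $a_n$ must coincide with $P$, and this makes the family as far from independent as possible.

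First I would unpack the hypothesis. Assuming for contradiction that such a $\varphi$ exists, one has $\varphi(e_n)=-e_n+2P$ with $P\in E$ independent of $n$ and containing a summand of length $\geq 2$. Setting $a_n=(e_n+\varphi(e_n))/2$ as in the discussion preceding the previous proposition, this immediately gives $a_n=P$ for every $n\in\mathbb{N}$. In particular $P\neq 0$, and $a_1=a_2=\cdots=P$.

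Then I would apply the previous proposition to distinct indices $i_1=1$, $i_2=2$: with $\lambda_1=1$, $\lambda_2=-1$, one has $\lambda_1 a_{i_1}+\lambda_2 a_{i_2}=P-P=0$, although $(\lambda_1,\lambda_2)\neq(0,0)$. This contradicts the conclusion of the proposition. Hence no automorphism of type 4 can be a translation by a constant.

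I expect no serious obstacle: the corollary is a one-line consequence of the proposition once one observes that constancy of the translation collapses the family $\{a_n\}$ to a single element. The only point worth making explicit is that the proposition is used in its indexed-family form (distinct indices forcing linearly independent $a_n$'s), which is precisely the form established in its proof.
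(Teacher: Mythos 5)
Your proposal is correct and follows the same route as the paper, which simply states that the corollary is an immediate consequence of the preceding proposition; you have made that deduction explicit by noting that a constant translation forces $a_{1}=a_{2}=P$, so $a_{1}-a_{2}=0$ contradicts the (indexed-family) linear independence established in the proposition's proof. Your remark that the proposition must be read in its indexed-family form is a fair and accurate observation about what its proof actually establishes.
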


\begin{proof}
The corollary follows immediately from the above proposition.  
\end{proof}

The results we have obtained in this section show that in many situations the fourth structure of a Grassmann superalgebra cannot exist. We believe we have grounds to conjecture that this is true in the general case.

\begin{conjectura}\label{conjtipo4}
Let $E=\varepsilon_{0}\oplus \varepsilon_{1}$ be an arbitrary $\mathbb{Z}_{2}$-grading on the Grassmann algebra. Then there exists a basis of the underlying vector space $L$ that contains at least one homogeneous generator of $E=\varepsilon_{0}\oplus \varepsilon_{1}$.   
\end{conjectura}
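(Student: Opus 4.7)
The plan is to reduce the conjecture to the problem of finding a $\varphi$-eigenvector inside $L$ itself, and then to produce one via an induction along the canonical filtration of $E$ by the powers of its augmentation ideal. Throughout, let $\varphi\in\operatorname{Aut}(E)$ be the involution corresponding to the grading $E=\varepsilon_{0}\oplus\varepsilon_{1}$.

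First, I would exploit the canonical structure of $E$. The augmentation ideal $I=\ker(\varepsilon\colon E\to F)$, defined by $\varepsilon(1)=1$ and $\varepsilon(e_i)=0$, is intrinsic to $E$ and hence $\varphi$-invariant; the same holds for every power $I^{k}$. Fixing the basis of $L$ we have $I^{k}=\Lambda^{\ge k}L$, so the quotients $I^{k}/I^{k+1}\cong \Lambda^{k}L$ are canonical. Consequently $\varphi$ induces an involution $\bar\varphi\colon L\to L$ with $\bar\varphi^{2}=1$, and more generally the $k$-th exterior power $\Lambda^{k}\bar\varphi$ on $\Lambda^{k}L$. Since $\operatorname{char}F\ne 2$, we obtain the eigenspace decomposition $L=L_{+}\oplus L_{-}$ of $\bar\varphi$.

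The degenerate case is quickly disposed of. Suppose $L_{-}=0$; the case $L_{+}=0$ is symmetric. Then $\Lambda^{k}\bar\varphi=\mathrm{id}$ on $\Lambda^{k}L$ for every $k\ge 1$, so for every $v\in L$ we have $\varphi(v)-v\in I^{k}$ for all $k\ge 2$. Since $\bigcap_{k\ge 1}I^{k}=\{0\}$ (every element of $E$ has finite length), this forces $\varphi(v)=v$, and every $e_{i}$ is already homogeneous. Assume from now on that $L_{+},L_{-}\ne 0$. For $v\in L_{+}$ write $\varphi(v)=v+r(v)$ with $r(v)\in I^{2}$; applying $\varphi$ once more and using $\varphi^{2}=1$ gives $\varphi(r(v))=-r(v)$, so $r(v)\in I^{2}\cap\varepsilon_{1}$. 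Symmetrically, for $v\in L_{-}$ one obtains $s(v)=\varphi(v)+v\in I^{2}\cap\varepsilon_{0}$. The conjecture reduces to showing that at least one of the linear maps $r\colon L_{+}\to I^{2}\cap\varepsilon_{1}$ or $s\colon L_{-}\to I^{2}\cap\varepsilon_{0}$ has nontrivial kernel, since any kernel element is a $\varphi$-eigenvector in $L$ and can be completed to a basis.

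To produce such a kernel element I would argue inductively along the filtration: construct a sequence $v^{(k)}\in L_{+}$ with $r(v^{(k)})\in I^{k}$ for $k=2,3,\ldots$, starting from an arbitrary nonzero $v^{(2)}\in L_{+}$. At step $k$ the residue of $r(v^{(k)})$ in $I^{k}/I^{k+1}\cong\Lambda^{k}L$ lies in the $-1$-eigenspace of $\Lambda^{k}\bar\varphi$, namely $\bigoplus_{a+b=k,\,b\text{ odd}}\Lambda^{a}L_{+}\wedge\Lambda^{b}L_{-}$, and the relations $\varphi^{2}=1$ and $\varphi(uv)=\varphi(u)\varphi(v)$ (applied to products of generators) produce cocycle-type identities controlling this residue. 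The aim is to use these identities to modify $v^{(k)}$ by an element of $L_{+}$ that kills the leading residue, thus passing to $v^{(k+1)}$; in the limit the equality $\bigcap_{k}I^{k}=\{0\}$ yields the desired $v\in L$ with $r(v)=0$, i.e., $\varphi(v)=v$.

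The main obstacle, and the reason the statement is still only a conjecture, is the verification that the inductive adjustment at each step can always be realized by an element of $L$ itself rather than merely of $E$. The two propositions of this section settle the sub-cases where $r$ takes the form of a monomial translation or a constant translation, which is strong circumstantial evidence for the conjecture. A general argument seems to require a cohomological reinterpretation: viewing $r$ as a $1$-cocycle that twists the trivial extension of $\bar\varphi$ from $L$ to $I$, and establishing that it must be a coboundary realized inside $L$. Proving such a vanishing theorem in full generality is the essential difficulty that I would expect to occupy the bulk of the work.
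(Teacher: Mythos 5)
The statement you are addressing is Conjecture \ref{conjtipo4}: the paper itself offers no proof of it, only partial evidence (the two propositions of Section 5 ruling out monomial and constant translations, and Theorem \ref{lin_id} on the linearization), so there is no argument of the paper to measure yours against. Your proposal is likewise not a proof but a program, as you acknowledge. The sound part is the reduction: the augmentation ideal $I$ is the set of non-invertible elements of $E$, hence $\varphi$-invariant together with all its powers; $I/I^{2}\cong L$ carries the induced involution $\bar\varphi$ (this is exactly Anisimov's linearization $\varphi_{\ell}$ used in the paper); and the conjecture is equivalent to one of the maps $r\colon L_{+}\to I^{2}\cap\varepsilon_{1}$, $s\colon L_{-}\to I^{2}\cap\varepsilon_{0}$ having nonzero kernel. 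Your treatment of the case $L_{-}=0$ essentially recovers Theorem \ref{lin_id}, by a cleaner filtration argument, once you make explicit that $\varphi(r(v))=-r(v)$ combined with triviality of the induced action on $I^{k}/I^{k+1}$ pushes $r(v)$ into every $I^{k}$.

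Two genuine gaps remain. First, the case $L_{+}=0$ is not ``symmetric'': $v\mapsto -v$ does not extend to an algebra automorphism of $E$, and for $v\in L_{-}$ the element $s(v)$ is $\varphi$-\emph{invariant}, so the filtration trick that worked for $r$ (namely $\varphi(r)=-r$ against $\varphi\equiv\mathrm{id}$ on $I^{k}/I^{k+1}$) gives nothing for $s$; this case is as open as the general one — compare Proposition \ref{prop_minus}, where $\bar\varphi=-\mathrm{id}$ and a nontrivial $s$ genuinely occurs. Second, and decisively, the inductive step is empty as stated: to pass from $v^{(k)}$ with $r(v^{(k)})\in I^{k}$ to $v^{(k+1)}$ you must produce $w\in L_{+}$ with $r(w)\in I^{k}$ whose leading residue in $\Lambda^{k}L$ cancels that of $r(v^{(k)})$, and no reason is given why the residue should lie in the image of this correction map; that surjectivity (or the cohomological vanishing you gesture at) \emph{is} the content of the conjecture. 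Moreover, even granting it, the corrections are degree-one elements of $L_{+}$, they do not tend to $0$ in the $I$-adic filtration, and an infinite sum of them is not an element of $L$ (elements of $L$ are finite linear combinations of the $e_{i}$); so the process would have to terminate, not merely ``converge'', and nothing in the plan forces that. In short: the framework is consistent with the paper's partial results, but the conjecture remains unproved by this proposal.
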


We shall describe another situation where one cannot obtain an automorphism of type 4, thus giving an extra strength to our conjecture. Recall that if $\varphi\colon E\to E$ is an automorphism of order $n$ where $n$ is a positive integer, the linearization $\varphi_\ell$ of $\varphi$ is defined as follows. Assume $\varphi(e_i) = u_i + v_i$ where $u_i\in L$ and $v_i$ is a linear combination of monomials of degrees $\ge 2$. Then $\varphi_\ell (e_i) = u_i$, and we extend then $\varphi_\ell$ from the $e_i$ to a homomorphism of $E$. Anisimov proved (see for example \cite{anisimov1, anisimov2}) that if $\varphi$ is an automorphism the so is $\varphi_\ell$. Moreover if $\varphi$ is of order 2 then $\varphi_\ell$ is also of order 2. 

We assume $\varphi$ is an automorphism of type 4, and choose a basis $\{e_i\}$ of $L$ such that $\varphi_\ell(e_i) =\pm e_i$ for every $i$ (this is always possible since $\varphi_\ell$ is also of order 2). 

We shall prove that $\varphi_\ell$ cannot be the identity map on $E$. If it were then $\varphi_\ell(e_i) = e_i$ for each $i$. We write $\varphi(e_i) = e_i + v_i$ where $v_i$ is a linear combination of monomials of lengths $\ge 2$, and let $s$ be the least length of a monomial that appears with non-zero coefficient in some of the $\{v_i\mid i\in\mathbb{N}\}$. Take an index $j$ such that $\varphi(e_j) = e_j+ \beta e_{j_1} \cdots e_{j_s} + w$ where $\beta\ne 0$ is in $F$ and $w$ is a linear combination of monomials of lengths $\ge s$. Then we have $\varphi^2(e_j)=e_j$ and this can be written as 
\begin{eqnarray*}
e_j&=& \varphi^2(e_j) = \varphi(e_j+ \beta e_{j_1} \cdots e_{j_s} + w) \\
&=&(e_j+ \beta e_{j_1} \cdots e_{j_s} + w) + \beta\varphi(e_{j_1})\cdots \varphi(e_{j_s}) + \varphi(w).
\end{eqnarray*}
Clearly $\varphi(e_{j_1})\cdots \varphi(e_{j_s}) = e_{j_1} \cdots e_{j_s} +$ some terms of lengths $\ge s$ all of them different from $e_{j_1} \cdots e_{j_s}$. 

Therefore we obtain $e_j = e_j + 2\beta e_{j_1} \cdots e_{j_s} +$ terms of lengths $\ge s$, all of them different from $e_{j_1} \cdots e_{j_s}$. Hence $\beta=0$, a contradiction. In this way we have proved the following theorem.

\begin{theorem}
\label{lin_id}
There is no automorphism $\varphi$ of type 4 such that $\varphi_\ell$ is the identity on $E$. 
\end{theorem}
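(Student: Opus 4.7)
The plan is to argue by contradiction. Suppose $\varphi$ is an automorphism of type $4$ and that $\varphi_\ell$ is the identity on $E$. Since $\varphi_\ell$ sends each generator to itself, I can write $\varphi(e_i) = e_i + v_i$, where every $v_i$ is a linear combination of monomials of length $\geq 2$. First note that at least one $v_i$ must be nonzero, for otherwise $\varphi$ would be the identity on $E$ and every $e_i$ would be an eigenvector of $\varphi$, contradicting type $4$.

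Next I would introduce a minimality parameter. Let $s \geq 2$ be the smallest length of a monomial appearing with nonzero coefficient in any $v_i$, and choose an index $j$, a nonzero scalar $\beta$, and indices $j_1 < \cdots < j_s$ such that the length-$s$ part of $v_j$ contains $\beta e_{j_1}\cdots e_{j_s}$. Accordingly write
\[
\varphi(e_j) = e_j + \beta e_{j_1}\cdots e_{j_s} + w,
\]
where $w$ is a linear combination of monomials of length $\geq s$ that does not have $e_{j_1}\cdots e_{j_s}$ as a summand.

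The pivotal observation I would then establish is a length-monotonicity property of $\varphi$: for every monomial $e_{k_1}\cdots e_{k_t}$ of length $t$, one has $\varphi(e_{k_1}\cdots e_{k_t}) = e_{k_1}\cdots e_{k_t}$ plus a linear combination of monomials of length strictly greater than $t$. This follows by expanding $(e_{k_1}+v_{k_1})\cdots(e_{k_t}+v_{k_t})$: if $r \geq 1$ factors are taken from the $v_{k_i}$ (each contributing length $\geq s$) and the remaining $t-r$ factors contribute length $t-r$, the total length is at least $rs + (t-r) = t + r(s-1) \geq t+1$. Consequently, for any linear combination $u$ of monomials of length $\geq s$, the length-$s$ part of $\varphi(u)$ coincides with the length-$s$ part of $u$ itself.

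To close the argument I compare length-$s$ components of both sides of $\varphi^2(e_j) = e_j$. The right-hand side has no length-$s$ part. On the left, using
\[
\varphi^2(e_j) = \varphi(e_j) + \beta\,\varphi(e_{j_1}\cdots e_{j_s}) + \varphi(w),
\]
the length-$s$ contributions are $\beta e_{j_1}\cdots e_{j_s} + w_s$ from $\varphi(e_j)$, an additional $\beta e_{j_1}\cdots e_{j_s}$ from the middle term (by the structural observation), and $w_s$ from $\varphi(w)$; summing gives $2\beta e_{j_1}\cdots e_{j_s} + 2w_s$. Since $e_{j_1}\cdots e_{j_s}$ was excluded from $w_s$, comparing coefficients forces $\beta = 0$, contradicting the choice of $\beta$. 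The main obstacle, and the step that requires real care, is precisely the length-monotonicity observation: it is the minimality of $s$ that prevents uncontrolled length-$s$ terms from being generated upon applying $\varphi$, which in turn is what makes the coefficient of $e_{j_1}\cdots e_{j_s}$ in $\varphi^2(e_j)$ trackable.
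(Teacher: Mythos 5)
Your proposal is correct and follows essentially the same route as the paper: write $\varphi(e_i)=e_i+v_i$, pick the minimal monomial length $s$ occurring in the $v_i$ and a witness $\beta e_{j_1}\cdots e_{j_s}$ in some $v_j$, then compare the length-$s$ components of $\varphi^2(e_j)=e_j$ to force $\beta=0$. Your explicit length-monotonicity lemma for $\varphi$ on monomials is exactly the step the paper dispatches with ``clearly,'' so your write-up is, if anything, a slightly more careful version of the same argument.
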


\begin{corollary}
If $\varphi$ is an automorphism of type 4 then the eigenspace of $\varphi_\ell$ in $L$ corresponding to the eigenvalue $-1$ is non-zero. 
\end{corollary}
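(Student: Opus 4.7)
The plan is to derive the corollary as a direct contrapositive consequence of Theorem \ref{lin_id}. The setup in the paragraph preceding that theorem already does most of the work for us: given any automorphism $\varphi$ of order $2$, its linearization $\varphi_\ell$ is also an order-$2$ automorphism, and after a suitable change of basis of $L$ we may assume $\varphi_\ell(e_i) = \pm e_i$ for every $i$. This diagonalization is exactly what allows us to speak cleanly about the $(+1)$- and $(-1)$-eigenspaces of $\varphi_\ell$ inside $L$.

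The proof strategy is then by contradiction. First, I would fix a basis $\{e_i\}$ of $L$ diagonalizing $\varphi_\ell$, as above. Next, I would suppose that the $(-1)$-eigenspace of $\varphi_\ell$ in $L$ is zero. Under this assumption every basis vector satisfies $\varphi_\ell(e_i) = e_i$, so $\varphi_\ell$ fixes every generator of the Grassmann algebra. Since $\varphi_\ell$ is a homomorphism of $E$ determined by its values on the generators, this forces $\varphi_\ell$ to be the identity map on all of $E$. Finally, this directly contradicts Theorem \ref{lin_id}, which asserts that no type-4 automorphism $\varphi$ can have $\varphi_\ell = \mathrm{id}_E$.

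There is no real technical obstacle here; the entire content of the corollary has been packaged into Theorem \ref{lin_id}. The only subtlety worth noting is the legitimacy of the change of basis used to diagonalize $\varphi_\ell$: one has to observe that if $\varphi$ is of type 4 with respect to some basis $\beta$, then replacing $\beta$ by a basis that diagonalizes $\varphi_\ell$ on $L$ does not change the fact that $\varphi$ itself is of type 4, because type 4 is, by definition, a basis-free property of $\varphi$. Once this is noted, the contradiction with Theorem \ref{lin_id} closes the argument in a single line.
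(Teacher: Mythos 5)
Your proposal is correct and matches the paper's intent exactly: the corollary is stated as an immediate consequence of Theorem \ref{lin_id}, obtained by noting that $\varphi_\ell$ restricts to an order-two, hence diagonalizable, map on $L$, so a trivial $(-1)$-eigenspace would force $\varphi_\ell=\mathrm{id}_E$, contradicting the theorem. Your added remark that being of type 4 is a basis-free property, so the diagonalizing change of basis is harmless, is a correct and worthwhile clarification.
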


The following example is also related to our conjecture.  
\begin{example}\label{example conjecture}
Let us fix a basis $\beta=\{e_{1}, e_{2},\ldots,e_{n}\ldots\}$ of the vector space $L$. We define the action of the automorphism $\varphi$ on the basis $\beta$ as follows
\[
\varphi(e_{2i-1})=e_{2i}, \qquad \varphi(e_{2i})=e_{2i-1},  
\]
for every $i\in\mathbb{N}$.

Note that there does not exist any element belonging to $\beta$ which is invariant under $\varphi$. However it is easy to choose another basis of $L$ which contains invariant  elements. For example instead of $e_1$ and $e_2$ in $\beta$ one takes $e_1+e_2$ and $e_1-e_2$, leaving the remaining elements of the basis $\beta$. It is exactly to this kind of property that Conjecture \ref{conjtipo4} refers to. 
\end{example} 


\end{document}